\theoremstyle{definition}
\newtheorem{theorem}{Theorem}[section]
\newtheorem{prop}[theorem]{Proposition}
\newtheorem{defn}[theorem]{Definition}
\newcommand{\Z}{{\mathbb{Z}}}
\newcommand{\Q}{{\mathbb Q}}
\newcommand\A{{\mathcal A}}
\newcommand\FF{{\mathcal F}}
\newcommand\LL{{\mathcal L}}
\newcommand\MM{{\mathcal M}}
\newcommand\PP{{\mathcal P}}
\newcommand\PMF{{\PP\kern-2pt\MM\FF}}
\newcommand\PML{{\PP\kern-2pt\MM\LL}}
\newcommand{\fsubd}{\mathrel{{\scriptstyle\searrow}\kern-1ex^d\kern0.5ex}}
\newcommand{\bsubd}{\mathrel{{\scriptstyle\swarrow}\kern-1.6ex^d\kern0.8ex}}
\newcommand{\fsubeq}{\mathrel{\raise-.7ex\hbox{$\overset{\searrow}{=}$}}}
\newcommand{\bsubeq}{\mathrel{\raise-.7ex\hbox{$\overset{\swarrow}{=}$}}}
\newcommand{\tsh}[1]{\left\{\kern-.9ex\left\{#1\right\}\kern-.9ex\right\}}
\newcommand{\lra}{\longrightarrow}
\newcommand{\Lie}{\mathit{Lie}}
\begin{document}

\title[]{Homotopy groups of certain highly connected manifolds via loop space homology}

\author{Samik Basu}
\email{samik.basu2@gmail.com; samik@rkmvu.ac.in}
\address{Department of Mathematics,
Vivekananda University, Belur, Howrah, 
West Bengal, India.}

\author{Somnath Basu}
\email{basu.somnath@gmail.com; somnath@rkmvu.ac.in}
\address{Department of Mathematics,
Vivekananda University, Belur, Howrah,
West Bengal, India.}
 
%\date{\today}

\subjclass[2010]{Primary:55P35,55Q52;\ secondary:16S37, 57N15}

\keywords{{Homotopy groups, Koszul duality, Loop space, Moore conjecture, Quadratic algebra}}

\maketitle

\begin{abstract}
For $n\geq 2$ we consider $(n-1)$-connected closed manifolds of dimension at most $(3n-2)$. We prove that away from a finite set of primes, the $p$-local homotopy groups of $M$ are determined by the dimension of the space of indecomposable elements in the cohomology ring $H^\ast(M)$. Moreover, we show that these $p$-local homotopy groups can be expressed as direct sum of $p$-local homotopy groups of spheres. This generalizes some of the results of our earlier work \cite{BaBa15_unpub}.
\end{abstract}

%\tableofcontents

%\setcounter{section}{0}

\section{Introduction}\label{intro}

In this document we consider $(n-1)$-connected closed manifolds of dimension at most $(3n-2)$ and prove analogous results to those for $(n-1)$-connected $2n$-manifolds in \cite{BaBa15_unpub}. We shall prove the following result (cf. Theorem \ref{htpy}, Theorem \ref{htpyform}).
\begin{theorem}
{\it Let $M$ be a closed $(n-1)$-connected $d$-manifold with $n\geq 2$, $d\leq 3n-2$ and $\dim H^\ast (M ) > 4$. Let $r$ denote the dimension of the space of indecomposables in $H^\ast(M;\Q)$. Then there is a finite set of primes $\Pi$ such that for $p\notin \Pi$, \\
\textup{(a)} the $p$-local homotopy groups of $M$ are determined by $r$;\\
\textup{(b)} the $p$-local homotopy groups of $M$ can be expressed as a direct sum of $p$-local homotopy groups of spheres.   
}
\end{theorem}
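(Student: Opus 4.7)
The plan is to analyze the $p$-local homotopy type of the loop space $\Omega M$ and show that it splits, after inverting a finite set of primes, as a weak product of loop spaces of spheres; the identification $\pi_k(M)\cong\pi_{k-1}(\Omega M)$ then converts this loop-space splitting into the desired direct-sum decomposition of $p$-local homotopy groups, and the determination by $r$ follows from counting the factors.

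The first step is to exploit the cohomological restrictions. Because $M$ is $(n-1)$-connected with $d\leq 3n-2$, any triple cup product of positive-degree classes vanishes for dimensional reasons (the total degree exceeds $d$), so $H^\ast(M;\Q)$ is a Poincar\'e duality algebra whose multiplication reduces to a single pairing $V\otimes V\to H^d(M;\Q)$, where $V$ is the $r$-dimensional space of indecomposables. Such an algebra is quadratic and, in the appropriate sense, Koszul, so the loop space homology $H_\ast(\Omega M;\Q)$ is accessible through Koszul duality as the corresponding quadratic dual algebra, which is close to a free associative algebra on $r$ generators.

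The second step is to realize the algebraic structure geometrically. Writing $M=M_0\cup_\phi D^d$ with $M_0=M\setminus\operatorname{int}(D^d)$, the vanishing of triple products shows that $M_0$ has no nontrivial higher-order attaching data; hence, $p$-locally for primes outside a finite set, $M_0$ is equivalent to a wedge $\bigvee_{i=1}^r S^{m_i}$ of spheres, one for each indecomposable cohomology class. The attaching map $\phi\colon S^{d-1}\to M_0$ of the top cell is, modulo bounded torsion, a sum of Whitehead brackets of the inclusions $S^{m_i}\hookrightarrow M_0$ with coefficients read off from the cup-product pairing.

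The third step is to pass to loop spaces. By James-type splittings, $\Omega(\bigvee_{i=1}^r S^{m_i})$ is $p$-locally a weak product of loop spaces $\Omega S^{n_j}$ for $p$ outside a further finite set of primes. Looping the cofibre sequence $S^{d-1}\xrightarrow{\phi}M_0\to M$ gives a principal fibration, and because $\phi$ is a sum of Whitehead products, the induced map on loop spaces splits off one of the $\Omega S^{n_j}$ factors after $p$-localization, producing a weak equivalence $\Omega M\simeq_{(p)}\prod_j\Omega S^{n_j}$. The main obstacle is this last stage: promoting the algebraic and rational identifications to a genuine $p$-local splitting of $\Omega M$. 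This requires controlling higher Samelson and Massey products and showing their torsion has bounded exponent, so that inverting a finite set of primes suffices to kill them; the hypothesis $\dim H^\ast(M)>4$ excludes the exceptional low-rank cases where the Whitehead-product obstructions fail to resolve into a clean product of loop spheres.
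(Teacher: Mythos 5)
Your overall strategy (split $\Omega M$ $p$-locally as a weak product of loop spaces of spheres, then read off homotopy groups via $\pi_k(M)\cong\pi_{k-1}(\Omega M)$) agrees with the paper's, but the route you take to the splitting has a genuine gap at exactly the step you yourself flag as ``the main obstacle.'' From the cofibration $S^{d-1}\xrightarrow{\phi} M_0\to M$ with $\phi$ a sum of Whitehead products, it does not follow formally that $\Omega M$ is $p$-locally a product of loop spaces of spheres: looping a cofibration does not produce a fibration, and what you would actually need is an inertness statement (for instance, that $\Omega M_0\to\Omega M$ admits a right homotopy inverse) together with an identification of the complementary factor. Even granting that $\Omega M$ is a $p$-local retract of $\Omega(\bigvee_i S^{m_i})$, which Hilton--Milnor identifies with a weak product of loop spheres, a retract of such a product need not itself be a product of loop spheres, and your proposed remedy --- ``controlling higher Samelson and Massey products and showing their torsion has bounded exponent'' --- is a restatement of the difficulty rather than an argument. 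Note also that the product must be an \emph{infinite} weak product (these manifolds are rationally hyperbolic), so ``splitting off one of the $\Omega S^{n_j}$ factors'' cannot by itself yield the asserted equivalence. A secondary gap: the claim that $\phi$ is, $p$-locally for almost all $p$, a sum of Whitehead brackets with coefficients given by the intersection pairing is standard rationally (by formality) but requires proof in the local setting; you assert it ``modulo bounded torsion'' without bounding the torsion.

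The paper avoids all of this by never analyzing the attaching map. It computes $H_*(\Omega M;R_\Sigma)$ integrally away from the torsion primes as the quadratic algebra $T_{R_\Sigma}(u_1,\dots,u_r)/(l(M))$ with a single relation dual to the intersection form (Proposition \ref{manhomloop}), uses the Diamond lemma and Lyndon words to produce an explicit free $R_\Sigma$-basis of the associated quadratic Lie algebra (Propositions \ref{Diamond}--\ref{lynM}), lifts the generators $u_i$ to homotopy after inverting finitely many further primes (Proposition \ref{hur}), and then constructs the comparison map $\mathit{hocolim}_n\,\prod_{i\leq n}\Omega S^{h_i}_\Gamma\to\Omega M$ directly from iterated Whitehead products indexed by the Lyndon basis; the Poincar\'e--Birkhoff--Witt theorem shows this is an $R_\Gamma$-homology isomorphism between simple spaces, hence a weak equivalence. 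If you wish to salvage the cofibration approach you would need to invoke, and verify the hypotheses of, an inertness theorem in the style of Beben--Theriault; that is a substantial input you have not supplied. Finally, for part (a) you still owe a precise count of the sphere factors in each degree; the paper extracts this from Koszulness of $H_*(\Omega M)$ and the generating function $q_M(t)$ (Theorem \ref{htpyform}), and ``counting the factors'' must be made precise along such lines.
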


If a generator of $H^d(M;\Q)$ is indecomposable then it follows from Poincar\'{e} duality that the rational cohomology of $M$ is that of $S^n$. In this case $\dim H^\ast(M;\Q)=2$ and $r=1$. Conversely, suppose that $r=1$ and $M$ does not have the cohomology of a sphere. It follows that $M$ is a manifold of dimension $2n$ with $H^\ast(M;\Q)=\Q[x]/(x^3)$ and $n$ is even. 

By the assumptions on $M$, the cup product of any three cohomology classes (of positive degree) is zero. Now we assume that $\dim H^\ast(M;\Q)>4$. If any class $\alpha=a\cup b\in H^i(M;\Q)$ (with $i<d$) is reducible then by Poincar\'{e} duality there exists $\beta\in H^{d-i}(M;\Q)$ such that $\alpha\cup\beta$ is a generator of $H^d(M;\Q)$. Thus, $a\cup b\cup \beta\neq 0$ and this violates our previous observation.  Therefore, if $r>1$ we deduce that 
$$
r= \dim( \oplus_{0<i < d} H^i(M;\Q))=\sum_{0<i<d}\dim H^i(M;\Q)=\dim H^\ast(M;\Q)-2.
$$
We note that the condition $\dim H^\ast (M ) > 4$ is equivalent to $r \geq 3$.  In terms of rational homotopy groups, $M$ is rationally hyperbolic if and only if $r>2$. In this case we have the following result (cf. Theorem \ref{Moorehcm}).

\begin{theorem}
{\it Let $M$ be a closed $(n-1)$-connected $d$-manifold with $n\geq 2$, $d\leq 3n-2$ and $\dim H^\ast(M;\Q)>4$. Then the homotopy groups of $M$  has unbounded $p$-exponents for all but finitely many primes.}
\end{theorem}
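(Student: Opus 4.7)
The plan is to deduce Theorem~\ref{Moorehcm} from Theorem~\ref{htpyform} together with the classical fact that the $p$-primary exponent of $\pi_\ast(S^n)$ grows without bound as $n\to\infty$ (Cohen--Moore--Neisendorfer for odd $p$, James for $p=2$). Fix $p\notin\Pi$ and write the decomposition supplied by Theorem~\ref{htpyform} as
\[
\pi_k(M)_{(p)}\;\cong\;\bigoplus_{i\in I}\pi_k(S^{n_i})_{(p)}\qquad\text{for all }k\geq 0,
\]
for some multiset $\{n_i\}_{i\in I}$ of positive integers independent of $k$. Any $p$-torsion element of a sphere summand injects into $\pi_\ast(M)_{(p)}$ via this splitting, so once I know $\sup_i n_i=\infty$, large-order $p$-torsion in the homotopy of high-dimensional spheres produces large-order torsion in $\pi_\ast(M)$ and the $p$-exponent is unbounded.

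The heart of the argument is therefore to show $\sup_i n_i=\infty$. I would tensor the splitting with $\Q$: the rational homotopy of $S^{n_i}$ is concentrated in degree $n_i$ and, when $n_i$ is even, also in degree $2n_i-1$, giving
\[
\dim_\Q\bigl(\pi_k(M)\otimes\Q\bigr)=\#\{i:n_i=k\}+\#\{i:n_i\text{ even},\ 2n_i-1=k\}.
\]
By Serre's finiteness theorem $\pi_k(M)\otimes\Q$ is finite-dimensional for each $k$, so every integer occurs with finite multiplicity in $\{n_i\}$. If in addition $\sup_i n_i<\infty$, only finitely many terms appear in total and $\pi_\ast(M)\otimes\Q$ would be finite-dimensional, making $M$ rationally elliptic. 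But the preamble identifies the hypothesis $\dim H^\ast(M;\Q)>4$ with $r\geq 3$, and for $d\leq 3n-2$ the condition $r>2$ is precisely the rationally hyperbolic range noted just before the statement. The resulting contradiction forces $\sup_i n_i=\infty$, and hence the $p$-exponent of $\pi_\ast(M)$ is unbounded.

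I expect the main technical nuisance to be the prime $p=2$: the sharpest lower bound on $p$-primary sphere exponents is cleanest for odd primes, so it may be convenient to enlarge the exceptional set by $\{2\}$ and appeal only to Cohen--Moore--Neisendorfer, which still leaves a cofinite set of primes. A subsidiary point is that when $n_i$ is even one must still exhibit the growing exponent inside $\pi_\ast(S^{n_i})_{(p)}$; this is immediate from Serre's $p$-local splitting $\Omega S^{2m}\simeq_{(p)} S^{2m-1}\times\Omega S^{4m-1}$, which reduces the even case to odd-dimensional spheres of unbounded dimension. Apart from this bookkeeping, the argument is essentially a formal consequence of the decomposition in Theorem~\ref{htpyform} and the dichotomy between rational ellipticity and hyperbolicity.
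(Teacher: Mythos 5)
Your overall strategy is the same as the paper's: decompose $\pi_*(M)\otimes R_\Gamma$ into sphere summands via Theorem \ref{htpy}, show that spheres of arbitrarily large dimension must occur among the summands, and then import large $p$-torsion from high-dimensional spheres. Your argument that $\sup_i n_i=\infty$ (finite multiplicity of each dimension plus rational hyperbolicity for $r>2$) is a correct, mildly repackaged version of the paper's observation that the Lyndon basis of $\LL^u(M)$ has elements of arbitrarily large degree; both rest on the same hyperbolicity input established in the introduction.

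The one genuine problem is the justification of your final step. Cohen--Moore--Neisendorfer and James prove \emph{upper} bounds on sphere exponents ($p^{n}$ annihilates the $p$-torsion of $\pi_*(S^{2n+1})$ for $p$ odd, $4^{n}$ for $p=2$); they do not show that high-dimensional spheres actually \emph{contain} elements of large order, which is what your argument needs. The correct input is a lower bound: either Gray's theorem that $\pi_*(S^{2n+1})$ contains elements of order $p^{n}$, or --- as the paper does --- the fact that the image of the $J$-homomorphism in the stable homotopy groups contains elements of order $p^{s}$ for every $s$ (\cite{Rav86}, Theorem 1.1.13), combined with the identification $\pi_k^s\cong \pi_{k+l}(S^l)$ for $l>k+1$ to realize these classes on spheres of arbitrarily large dimension. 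With that substitution your proof goes through, and the detour through Serre's $p$-local splitting of $\Omega S^{2m}$ becomes unnecessary, since stable classes can be realized on spheres of large dimension of either parity.
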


The above result verifies the Moore conjecture (see the discussion before Theorem \ref{Moorehcm} as well as \cite{FHT01} pp. 518) for such spaces in the rationally hyperbolic case. The low rank cases, i.e., when $r=1,2$ are discussed in \S \ref{r12} (see Theorem \ref{lowrank}).\\

\noindent
\thanks{The authors would like to thank Alexander Berglund for certain helpful discussions.}

\section{Homology of the loop space}\label{homloop}

In \cite{BerBor15}, the homology of $\Omega M$ is computed for $M$ a $(n-1)$-connected $(3n-2)$ manifold. Let us recall it. Let $n \geq 2$ and suppose that $M$ is an $(n - 1)$-connected closed manifold of dimension $d \leq 3n -2$ such that $\dim H^\ast (M ) > 4$. Choose a basis $x_1 , \cdots , x_r$ for the indecomposables of $H^\ast  (M)$. If we choose an orientation class $[M]$ for $M$ then let $c_ {ij} = \langle x_ i x_j , [M ]\rangle$. Consider the homology ring $H_\ast(\Omega M)$ of the based loop space, equipped with the Pontrjagin product. This ring is freely generated as an associative algebra by classes $u_1 , \cdots, u_r $ whose homology suspensions are dual to the classes $x_1 , \cdots, x_r$ (in particular $|u_i | = |x_i | - 1$), modulo the single quadratic relation
$$\sum_{i,j}(-1)^{ |u_i |} c_ {ji} u_i u_j = 0.$$

Let $M$ be a closed $(n-1)$-connected $d$-manifold with $d\leq 3n-2$. The cohomology of $M$ is finitely generated and has $p$-torsion only for a finite set of primes $p$. Let $\Sigma$ be the set of primes such that the cohomology of $M$ has $p$-torsion. Define 
$$R_\Sigma= \Z[\textstyle{\frac{1}{p}}\,| \,p\in \Sigma].$$
Then we may deduce the following facts.\\
\hspace*{0.5cm} (a) $H^*(M;R_\Sigma)$ is a free $R_\Sigma$-module. \\
\hspace*{0.5cm} (b) The natural map $H^*(M;R_\Sigma) \otimes_{R_\Sigma} \Q \to H^*(M;\Q)$ is an isomorphism. \\
The first fact follows from Universal Coefficient Theorem for cohomology and the defining property of $R_\Sigma$. The second fact is clear.\\

As noted earlier, the only non-trivial products of positive dimensional classes are given by the intersection form. Therefore the module of indecomposables $\A(M)= \oplus_{0< i < d} H^i(M;R_\Sigma)$. Let $x_1,\cdots,x_r$ be a basis of $\A(M)$. Fix a choice of an orientation class $[M]\in H^d(M;R_\Sigma)$ of $M$. Let $c_ {ij} = \langle x_ i x_j , [M ]\rangle$. Let $u_1 , \cdots, u_r $ denote classes whose homology suspensions are dual to the classes $x_1 , \cdots, x_r$ (in particular $|u_i | = |x_i| - 1$). With coefficients in a field $k=\Q$ or a quotient field of $R_\Sigma$ we have the following result for the homology of the loop space with respect to the Pontrjagin product (cf. \cite{BerBor15}, Theorem 1.1).
\begin{prop}\label{loopk}
As associative rings, 
$$H_*(\Omega M;k) \cong T_k(u_1,\cdots,u_r)/\big(\textstyle {\sum} (-1)^{ |u_i |+1} c_ {ji} u_i u_j \big)$$ 
\end{prop}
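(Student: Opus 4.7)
The plan is to extend the Berglund--Borjeson computation \cite{BerBor15}, which establishes this statement over $\Q$, to any quotient field $k$ of $R_\Sigma$. The extension is justified because, by facts (a) and (b) above, $H^*(M;k)$ is a free $k$-module with the same Betti numbers and cup product structure as $H^*(M;\Q)$; the algebra structure therefore does not depend on the characteristic in any essential way, so the rational argument can be replayed over $k$.

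The argument proceeds in three main steps. First, build a minimal CW model $M \simeq W \cup_\phi e^d$, where $W = \bigvee_{i=1}^r S^{n_i}$ and $n_i = |x_i|$. The dimension constraint $d \leq 3n-2$ (so in particular $3n > d$) forces all triple products in $H^{>0}(M;k)$ to vanish, so the attaching map $\phi \in \pi_{d-1}(W)$ is determined via Poincar\'{e} duality by the intersection pairing $(c_{ij})$; explicitly, it is a sum of Whitehead products $\sum c_{ji}[\iota_i,\iota_j]$, up to signs fixed by graded conventions.

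Second, use the James--Milnor theorem to identify $H_*(\Omega W;k) \cong T_k(u_1,\ldots,u_r)$ with $|u_i| = n_i - 1$, and observe that looping the cofibration $S^{d-1}\xrightarrow{\phi} W \to M$ presents $H_*(\Omega M;k)$ as the quotient of $T_k(u_1,\ldots,u_r)$ by the two-sided ideal generated by the homological image of $\phi$. Third, apply Samelson's theorem to translate each bracket $[\iota_i,\iota_j]$ into the graded commutator $u_iu_j - (-1)^{|u_i||u_j|}u_ju_i$ in the Pontrjagin algebra; combined with the symmetry properties of $c_{ij}$ enforced by Poincar\'{e} duality, this produces exactly the single quadratic relation $\sum(-1)^{|u_i|+1}c_{ji}u_iu_j = 0$.

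The main obstacle is justifying that looping the top-cell attachment introduces exactly one quadratic relation and no higher-order corrections. This is where the dimension hypothesis $d \leq 3n-2$ is essential: any secondary Whitehead products contributing to $\phi$, or higher differentials in the Eilenberg--Moore/Serre spectral sequence for $\Omega M$, would manifest as nontrivial cohomology classes of $M$ in degrees $\geq 3n > d$, which vanish for dimension reasons. The verification of this vanishing, together with careful sign bookkeeping between Whitehead brackets and Pontrjagin commutators, is the technical core of the proof, but the arguments are characteristic-free once the freeness of $H^*(M;R_\Sigma)$ is established.
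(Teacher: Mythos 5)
First, a point of orientation: the paper does not prove Proposition \ref{loopk} at all --- it is quoted from \cite{BerBor15}, Theorem 1.1, whose proof runs through formality of $C^*(M;k)$ (automatic here, since any higher $A_\infty$-product $m_{\geq 3}$ would land in degrees $\geq 3n-1>d$) together with the theorem that the resulting one-relator quadratic Poincar\'e duality algebra is Koszul when $\dim H^*(M;k)>4$, so that $H_*(\Omega M;k)\cong \mathrm{Ext}_{H^*(M;k)}(k,k)$ is the quadratic dual via Eilenberg--Moore. Your cell-attachment route is genuinely different, and its central step is exactly the one you flag but do not close: the assertion that looping the cofibration $S^{d-1}\xrightarrow{\phi} W\to M$ presents $H_*(\Omega M;k)$ as $T_k(u_1,\dots,u_r)$ modulo the ideal generated by the Hurewicz image of $\phi$. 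This is the statement that the top cell is \emph{inert}; it is not a formal consequence of looping a cofibration (cofibrations do not loop to fibrations, and attaching a cell can create new indecomposables in loop space homology rather than only the expected relation). Your justification --- that corrections ``would manifest as nontrivial cohomology classes of $M$ in degrees $\geq 3n>d$'' --- locates the obstruction in the wrong place: the failure of inertness lives in $H_*(\Omega M;k)$ in arbitrarily high degrees and is invisible in $H^*(M)$, so vanishing of $H^{>d}(M)$ gives no control over it.

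A concrete way to see that your degree count cannot suffice is that it never uses the hypothesis $\dim H^*(M)>4$: run verbatim on $\C P^2=S^2\cup_\eta e^4$ (which satisfies $n=2$, $d=4=3n-2$, and for which the Hurewicz image of $\eta$ is $u^2$), it would give $H_*(\Omega \C P^2;\Q)\cong T(u)/(u^2)$, a two-dimensional algebra, whereas in fact $H_*(\Omega \C P^2;\Q)\cong \Lambda(u)\otimes\Q[v]$ with $|v|=4$. The rank hypothesis must therefore enter precisely at the ``no higher-order corrections'' step, via Koszulness of the quadratic algebra or an inertness theorem (Halperin--Lemaire rationally, and its mod $p$ analogues); supplying that is the actual content of the proposition, not bookkeeping. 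A secondary, repairable issue: at primes $p\notin\Sigma$ the skeleton $M^{(d-1)}$ need not be a wedge of spheres, since its cells (in degrees between $n$ and $d-n\leq 2n-2$) may be attached by nontrivial torsion classes such as $\eta^2$. It is only a suspension, which still gives $H_*(\Omega W;k)\cong T_k(u_1,\dots,u_r)$ by Bott--Samelson, but then $\phi$ acquires torsion composition components not determined by the intersection form, and you must argue via James--Hopf invariants (not just Samelson's theorem) that its image in $H_{d-2}(\Omega W;k)$ is nonetheless the stated quadratic element.
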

\noindent This directly leads us to the following integral version.
\begin{prop}\label{manhomloop}
As associative rings,
 $$H_*(\Omega M;R_\Sigma) \cong T_{R_\Sigma}(u_1,\cdots,u_r)/\big(\textstyle {\sum} (-1)^{ |u_i |+1} c_ {ji} u_i u_j \big)$$ 
\end{prop}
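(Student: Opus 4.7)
The plan is to bootstrap Proposition \ref{manhomloop} from Proposition \ref{loopk} using that $R_\Sigma$ is a principal ideal domain whose residue fields are precisely the fields covered by Proposition \ref{loopk}: the generic point gives $\Q$, and each maximal ideal $(p) \subset R_\Sigma$ with $p \notin \Sigma$ gives $\F_p$. The assumption that $H^\ast(M; R_\Sigma)$ is free makes the coefficients $c_{ij}$ and the generators $u_i$ well-defined over $R_\Sigma$ to begin with.

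First, I would build an $R_\Sigma$-algebra homomorphism
$$\phi : T_{R_\Sigma}(u_1, \ldots, u_r)\big/\bigl(\textstyle{\sum} (-1)^{|u_i|+1} c_{ji}\, u_i u_j\bigr) \longrightarrow H_\ast(\Omega M; R_\Sigma)$$
by lifting each generator $u_i$ to a class in $H_{|x_i|-1}(\Omega M; R_\Sigma)$ whose homology suspension is dual to $x_i$. Such a lift exists because the dual of $x_i$ is transgressive in the path-loop Serre spectral sequence with $R_\Sigma$-coefficients, and the $E_2$-page is torsion-free (being built from the free module $H^\ast(M; R_\Sigma)$). The Pontrjagin product extends this to an algebra map from the tensor algebra. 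To see that the quadratic relation is killed: its image in $H_\ast(\Omega M; k)$ vanishes for every residue field $k$ of $R_\Sigma$ by Proposition \ref{loopk}, and since $H_\ast(\Omega M; R_\Sigma)$ is finitely generated in each degree and $R_\Sigma$ is a PID, vanishing at every residue field forces vanishing over $R_\Sigma$.

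Next, I would show $\phi$ is an isomorphism. Both the domain $A$ and the codomain $H_\ast(\Omega M; R_\Sigma)$ should be finitely generated free $R_\Sigma$-modules in each degree, and for every residue field $k$ of $R_\Sigma$, the base change $\phi \otimes_{R_\Sigma} k$ is the isomorphism of Proposition \ref{loopk} — using $H^\ast(M; k) \cong H^\ast(M; R_\Sigma) \otimes_{R_\Sigma} k$ so that the field-coefficient presentation matches the base change of the $R_\Sigma$-coefficient presentation. A map of finitely generated free modules over a PID that becomes an isomorphism at every residue field is itself an isomorphism, so we conclude.

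The main technical obstacle is verifying that both sides are $R_\Sigma$-free in each degree, since only then is the residue-field check decisive. For the domain $A$, the Hilbert series of $A \otimes_{R_\Sigma} k$ is constant across all residue fields by Proposition \ref{loopk}, which by a standard flatness criterion forces $A$ itself to be $R_\Sigma$-free in each degree. For the codomain $H_\ast(\Omega M; R_\Sigma)$, freeness can be obtained either from an Adams--Hilton model of $M$ over $R_\Sigma$ (available because $M$ admits a minimal CW-structure reflecting its free $R_\Sigma$-cohomology) or by a Universal Coefficient comparison of the ranks of $H_\ast(\Omega M; \F_p)$ and $H_\ast(\Omega M; R_\Sigma) \otimes_{R_\Sigma} \F_p$ for each $p \notin \Sigma$.
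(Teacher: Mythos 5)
Your overall strategy --- bootstrapping the $R_\Sigma$-statement from Proposition \ref{loopk} by checking at the fraction field and at every residue field of the PID $R_\Sigma$ --- is the same as the paper's, and you correctly identify the degreewise freeness of both sides as the crux. The gap is that your justifications of that freeness do not close. For the domain you argue that the Hilbert series of $A\otimes_{R_\Sigma}k$ is ``constant across all residue fields by Proposition \ref{loopk}''; but that proposition only identifies $A\otimes k$ with $H_*(\Omega M;k)$, and the field-independence of $\dim_k H_*(\Omega M;k)$ is, by the Universal Coefficient Theorem, \emph{equivalent} to the freeness of $H_*(\Omega M;R_\Sigma)$ that your second argument (the ``UCT comparison of ranks'') is supposed to deliver --- so the two freeness claims feed each other. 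The missing input is a proof that the one-relator algebra $T_k(u_1,\ldots,u_r)/(\bar{\ell})$ has a basis independent of $k$; this holds because the matrix $(c_{ij})$ is unimodular over $R_\Sigma$ (Poincar\'e duality with free cohomology), so after a change of basis the relation reads $u_1u_r = \cdots$ with unit leading coefficient and the Diamond lemma (Proposition \ref{Diamond}) produces a monomial basis valid over $R_\Sigma$ and over every residue field. The paper avoids the circularity differently: it runs the path--loop Serre spectral sequence with $R_\Sigma$-coefficients, which in degrees $\leq d-1$ exhibits $H_*(\Omega M;R_\Sigma)$ as $T_{R_\Sigma}(u_1,\ldots,u_r)/(l)$ for \emph{some} single quadratic element $l$ (with freeness built in), and then pins $l$ down to the stated relation up to units by comparing with the field computations (the relation $al = b\sum(-1)^{|u_i|+1}c_{ji}u_iu_j$ with $a,b$ forced to be units).

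A secondary point: your claim that an element of a module which is finitely generated in each degree over the PID $R_\Sigma$ and vanishes at every residue field must vanish is false as stated (consider $\pi \in R/(\pi^2)$); it requires torsion-freeness of the target, i.e.\ again the freeness you have not yet established. Once that freeness is in hand, the vanishing of the quadratic relation already follows from the single rational check, since $H_*(\Omega M;R_\Sigma)\to H_*(\Omega M;\Q)$ is then injective. With these repairs your argument becomes correct and is essentially the paper's.
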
  
\begin{proof}
Since $M$ is an orientable manifold the homology $H_*(M-pt)$ matches $H_\ast M$ in all degrees upto $(d-1)$. From the conditions on $M$ we deduce that $H^*(M-pt)$ is free on the classes $x_i^\ast$ which are dual to the classes $x_i$ and the products are all zero. It follows that $H_*(\Omega (M-pt))$ is a tensor algebra on the classes $u_i$. Therefore we have a map 
$$\phi: T_{R_\Sigma}(u_1,\cdots,u_r) \stackrel{\cong}{\lra} H_*(\Omega(M-pt))\lra H_*(\Omega M).$$

In the dimension range $0\leq * \leq d-1$, we may compute $H_*(\Omega M)$ using the Serre spectral sequence associated to the path-space fibration $\Omega M \to PM \to M$. This has the form 
$$E^2_{p,q} = H_p(M) \otimes H_q(\Omega M) \implies H_*(\mathit{pt})$$
with coefficients in $R_\Sigma$. From the multiplicative structure on the dual cohomology spectral sequence it follows that  the indecomposable elements (with basis $x_j^\ast$) lie in the image of the transgression. Therefore the classes $x_j$ are transgressive and the transgress onto the classes $u_j$. Thus in the spectral sequence we have $d(x_j)=u_j$. 

The homology of $M$ being torsion-free implies that the cohomology of $M$ is just the dual. From the dual spectral sequence, we deduce that the classes $x_k\otimes u_j$ are mapped by differentials onto the classes $u_k\otimes u_j$ on the vertical $0$-line. It follows that in degrees $\leq d-1$, $H_*(\Omega M)$ are generated by the classes $u_i, u_iu_j$. The differential on the class $[M]$ hits a linear combination of $x_k \otimes u_j$. Hence, in this range of degrees $H_*(\Omega M) \cong T_{R_\Sigma}(u_1,\cdots,u_r) /(l)$ for some element $l$ of homogeneous degree $2$ in $u_i$. 

Let $H_*(\Omega M)^{(2)}$ denote the free $R_\Sigma$-submodule generated by the homogeneous degree $2$ elements which is isomorphic to $R_\Sigma\{u_i\otimes u_j\}/(l)$. The computations of \cite{BerBor15} as quoted above imply that 
$$R_\Sigma\{u_i\otimes u_j\}/(l) \otimes_{R_\Sigma} k \cong R_\Sigma\{u_i\otimes u_j\}/\big(\textstyle{\sum} (-1)^{ |u_i |+1} c_ {ji} u_i u_j \big) \otimes_{R_\Sigma} k$$
for $k$ being either the fraction field of $R_\Sigma$, or $R/(\pi)$ for primes $\pi$ in $R_\Sigma$. The first case implies that there are $a,b \in R_\Sigma$ such that 
$$al = b \big(\textstyle{\sum} (-1)^{ |u_i |+1} c_ {ji} u_i u_j\big) $$
and the second cases imply that $a$ and $b$ are non-zero and differ by a unit modulo $\pi$ for every prime $\pi$. Thus $a$ and $b$ are forced to be units after possible cancellations, and we may take $l=\sum (-1)^{ |u_i |+1} c_ {ji} u_i u_j $. Thus
$$H_\ast(\Omega M)^{(2)} \cong R_\Sigma \{u_i \otimes u_j \}/\big(\textstyle{\sum} (-1)^{ |u_i |+1} c_ {ji} u_i u_j \big)$$
so that the element $\sum (-1)^{ |u_i |+1} c_ {ji} u_i u_j  \in  T_{R_\Sigma}(u_1,\cdots,u_r)$ goes to $0$ under $\phi$ above. Thus we obtain a ring map 
$$ T_{R_\Sigma}(u_1,\cdots,u_r)/\big(\textstyle{\sum} (-1)^{ |u_i |+1} c_ {ji} u_i u_j ) \to H_*(\Omega M;R_\Sigma\big) $$
which is an isomorphism after tensoring with the fraction field of $R_\Sigma$ or going modulo a prime from Proposition \ref{loopk}. The result now  follows.
\end{proof}

\section{Homotopy groups of certain $(n-1)$-connected manifolds}

In this section we deduce results about the homotopy groups of $(n-1)$-connected manifolds of dimension $d\leq 3n-2$ after inverting finitely many primes. We use the computation of the homology of the loop space in Section \ref{homloop}. Note from Proposition \ref{manhomloop} that $H_*(\Omega M)$ is a quadratic algebra. We prove that this possesses a nice basis and so does the corresponding quadratic Lie algebra. The basis of the Lie algebra is used to express $\pi_*(M)$ as a direct sum of homotopy groups of spheres after inverting finitely many primes.

\subsection{Algebraic preliminaries}
We start by recalling some algebraic preliminaries on quadratic algebras and quadratic Lie algebras. For further details we refer to \cite{BaBa15_unpub,PolPos05,Nei10}. Let $A$ be a commutative ring (usually a principal ideal domain (PID)). If $V$ is a free $A$-module then we shall denote by $T_A(V)$ (often abbreviated as $T(V)$) the tensor algebra generated by $V$. The notation $\Lie(V)$ (respectively $\Lie^{gr}(V)$) denotes the free Lie algebra (respectively graded Lie algebra) on the $A$-module $V$.

\begin{defn}
For $R\subset V\otimes_A V$, the associative algebra $A(V,R)=T(V)/(R)$ is called a quadratic $A$-algebra.\\
\hspace*{0.5cm} If $R\subset V\otimes_A V$ lies in $\Lie(V)$, the Lie algebra $L(V,R)= \Lie(V)/((R))$ is called a quadratic Lie algebra over $A$. In the graded case this is denoted $L^{gr}(V,R)$. 
\end{defn} 

It may be observed that the universal enveloping algebra of $L(V,R)$ is $A(V,R)$ and in the graded case the universal enveloping algebra of $L^{gr}(V,R)$ is $A(V,R)$ as graded modules. If in addition the modules $A(V,R)$ and $L(V,R)$ are free, there is a Poincar\'e-Birkhoff-Witt theorem which may be stated as $E_0(A(V,R))\cong A[L(V,R)]$. The notation $A[L(V,R)]$ denotes the polynomial $A$-algebra on the module $L(V,R)$ and $E_0(A(V,R))$ denotes the associated graded for the filtration of $A(V,R)$ induced by the weight filtration on the tensor algebra. A similar statement holds for the graded case where one interprets the polynomial algebra as the polynomial algebra on even degree classes tensored with the exterior algebra on the odd degree classes. Finally from \cite{Car58}, \cite{Laz54} one may deduce that for a PID $A$, $L(V,R)$ is a free module if $A(V,R)$ is free and $V$ has finite rank. 

Next we recall the Diamond lemma from \cite{Berg78}. Suppose that the free module $R$ can be given a basis where each element is of the form $W_i-f_i$ where $W_i$ is a monomial. Call a monomial $R$-indecomposable if it does not possess any submonomial which occurs as $W_i$ in the above chosen basis. The Diamond lemma states certain sufficient conditions under which the $R$-indecomposable monomials form a basis of $A(V,R)$. The following implication of the Diamond lemma suffices for this paper.
\begin{prop}\label{Diamond}
Suppose that $R$ is generated by a single element of the form 
$$x_a\otimes x_\beta = \sum_{(i,j)\neq (\alpha, \beta)} a_{i,j}\,\, x_i \otimes x_j$$ 
with $\alpha\neq \beta$. Then the $R$-indecomposable elements form a basis for $A(V,R)$. 
\end{prop}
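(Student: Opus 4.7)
The plan is to invoke Bergman's Diamond Lemma directly and observe that the hypothesis $\alpha\neq\beta$ forces the rewriting system to be ambiguity-free. I would set up the reduction system consisting of the single rewriting rule
$$x_\alpha\otimes x_\beta\;\longrightarrow\;\sum_{(i,j)\neq(\alpha,\beta)} a_{i,j}\,x_i\otimes x_j.$$
To run Bergman's machinery one needs a semigroup partial ordering on the free monoid on $\{x_1,\dots,x_r\}$ in which the leading monomial $W:=x_\alpha\otimes x_\beta$ strictly dominates every monomial appearing on the right-hand side of the rule, and which satisfies the descending chain condition. This is easily arranged: fix any total ordering on the set of length-$2$ monomials with $W$ as the maximum, then extend to arbitrary monomials first by total degree (the rule is degree-preserving, so this is neutral on length-$2$ words) and then lexicographically, using the chosen order on length-$2$ factors. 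Finite generation guarantees the descending chain condition.

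The heart of the argument is the ambiguity analysis. Bergman's Diamond Lemma reduces confluence to two types of ambiguities: \emph{inclusion} ambiguities (one leading word is a proper subword of another) and \emph{overlap} ambiguities (two leading words share a nonempty proper prefix/suffix). Inclusion ambiguities are vacuous here because there is a single rule, so its leading monomial $W$ cannot sit properly inside a copy of itself. For overlap ambiguities one would need a factorization $ABC$ with $A,B,C$ nonempty and $AB=W=BC$; since $|W|=2$, each of $A,B,C$ is a single letter, so $B$ is simultaneously the second letter $x_\beta$ of the first copy and the first letter $x_\alpha$ of the second, forcing $x_\alpha=x_\beta$, contrary to hypothesis. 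Hence the reduction system has no ambiguities at all.

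With no ambiguities to resolve, Bergman's Diamond Lemma immediately gives confluence of the reduction system and therefore the conclusion: every element of $A(V,R)=T(V)/(R)$ has a unique normal form obtained by iteratively replacing occurrences of the subword $W$ according to the rule, and the $R$-indecomposable monomials (those with no occurrence of $W$ as a subword) descend to a basis of $A(V,R)$. I do not anticipate a serious obstacle; the only mild technicality is pinning down the semigroup order making the rule strictly descending, and the $\alpha\neq\beta$ assumption is precisely what kills the overlap ambiguity and makes the lemma applicable.
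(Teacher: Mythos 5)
The paper offers no proof of this proposition at all --- it is stated as an ``implication'' of Bergman's Diamond Lemma with a bare citation to \cite{Berg78} --- so there is no argument of the authors to compare yours against line by line. Your ambiguity analysis is correct and is surely the intended content: a single rule whose leading word $W=x_\alpha\otimes x_\beta$ has length two admits no inclusion ambiguities, and $\alpha\neq\beta$ kills all overlap ambiguities. The genuine gap is the step you dismiss as ``easily arranged'': the existence of a semigroup partial order with the descending chain condition for which $W$ strictly dominates every monomial of the right-hand side. An arbitrary total order on length-two monomials with $W$ maximal does not extend to a semigroup order (monotonicity under left and right multiplication is a real constraint: for a total semigroup order either $x_\alpha<x_\beta$, which forces $x_\alpha x_\beta<x_\beta x_\beta$, or $x_\beta<x_\alpha$, which forces $x_\alpha x_\beta<x_\alpha x_\alpha$, so $W$ cannot dominate both squares), and your ``degree first, then lexicographically on length-two factors'' recipe is neither well defined on odd-length words nor compatible with concatenation.

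In fact no proof can close this gap, because the proposition as stated is false. Take $V$ of rank $2$ with the single relation $l=x_1x_2-x_2x_1-x_1x_1-x_2x_2$, i.e.\ $\alpha=1$, $\beta=2$, $a_{2,1}=a_{1,1}=a_{2,2}=1$. Then
$$x_1l+lx_1+x_2l+lx_2=-2\left(x_1x_1x_1+x_2x_1x_1+x_2x_2x_1+x_2x_2x_2\right)$$
is a nonzero element of the ideal $(l)$ that is a linear combination of $R$-indecomposable monomials, so over $\Q$ (or any ring in which $2\neq 0$) the indecomposables are linearly dependent in $A(V,R)$ and cannot form a basis; one can also check that the class of $x_1x_1x_2$ does not lie in their span. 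Correspondingly your rewriting system is not reduction-finite: $x_1x_1x_2x_2\to x_1x_2x_1x_2\to\cdots$ never terminates. The statement becomes true --- and your argument then works verbatim --- under the additional hypothesis $a_{\alpha,\alpha}=0$: order the generators with $x_\alpha$ first and $x_\beta$ second and use the degree-lexicographic order, a semigroup total order with DCC in which $x_\alpha\otimes x_\beta$ is the strict maximum of the support of the relation. That extra condition does hold for $l(M)$ as normalized in the proof of Proposition \ref{lynM} (at least for $d>2n$ the only term of $l(M)$ beginning with $u_1$ is $u_1u_r$, and the case $d=2n$ is deferred to \cite{BaBa15_unpub}), so the paper's applications survive; but both the freestanding Proposition \ref{Diamond} and your proof of it need the extra hypothesis.
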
  
There is an analogous construction for Lie algebras $L(V,R)$ defined by generators and relations (see \cite{LaRam95}). This is called a Lyndon basis. Start with a basis of $V$ and an order on the basis set. We call a word in elements of $V$ a Lyndon word if it is lexicographically smaller than its cyclic rearrangements. For a Lyndon word $l$ there are unique Lyndon words $l_1$ and $l_2$ so that $l=l_1l_2$ and $l_2$ is the largest possible Lyndon word occuring in the right in $l$. Inductively we may associate to the Lyndon word $l$ the free Lie algebra element $b(l)=[b(l_1),b(l_2)]$. One verifies that this is a basis of the free Lie algebra on $V$. In the case $R\neq 0$, we say a Lyndon word is $R$-standard if it cannot be further reduced using relations in $R$ (with respect to the chosen order on $V$). From \cite{LaRam95} we recall the following result (also see \cite{BaBa15_unpub}, Theorem 2.17).
\begin{prop}\label{liebasis}
Suppose that $A$ is a localization of $\Z$ and $R$ as in Proposition \ref{Diamond}. Then, the $R$-standard Lyndon words give a basis of $L(V,R)$.   
\end{prop}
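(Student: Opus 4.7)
The plan is to combine the associative Diamond Lemma (Proposition \ref{Diamond}) with the Poincar\'e-Birkhoff-Witt isomorphism $E_0(A(V,R))\cong A[L(V,R)]$ and the classical Lyndon word combinatorics, following the scheme of Lalonde-Ram. By Proposition \ref{Diamond}, $A(V,R)$ is a free $A$-module with basis the $R$-indecomposable monomials; since $A$ is a localization of $\Z$ (hence a PID) and $V$ has finite rank, the Cartier-Lazard result cited earlier makes $L(V,R)$ free as well, so the PBW isomorphism applies and its rank structure is available.

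The key combinatorial input is the Chen-Fox-Lyndon theorem: every word factors uniquely as a non-increasing product of Lyndon words. Under this factorization, an $R$-indecomposable monomial (one avoiding the leading subword $x_\alpha x_\beta$ of the relation) corresponds precisely to a non-increasing product whose factors are all $R$-standard Lyndon words, and conversely. Hence $\operatorname{rank}_A A(V,R)_n$ equals the number of such non-increasing products of total weight $n$. Matching this with $\operatorname{rank}_A A[L(V,R)]_n$ via PBW (using the standard monomial basis of a polynomial algebra on a free module, with appropriate signs in the graded case) forces $\operatorname{rank}_A L(V,R)_n$ to equal the number of $R$-standard Lyndon words of weight $n$.

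For the basis itself, send each $R$-standard Lyndon word $l$ to $b(l)\in L(V,R)$. Spanning proceeds by induction on the lexicographic order: given a non-$R$-standard Lyndon word $l$, its canonical factorization $l=l_1 l_2$ eventually exhibits an occurrence of the relation in $R$, so the derivation property of the bracket combined with the relation rewrites $b(l)$ as an $A$-linear combination of $b(l')$ for Lyndon words $l'$ that are lexicographically strictly smaller than $l$; iteration terminates on words of bounded length. Linear independence then follows from the rank count above, giving the desired basis. The main obstacle is verifying this reduction: one must show that substituting the relation inside a Lyndon bracket, after rebracketing via Jacobi and antisymmetry, produces only bracketed Lyndon words that are strictly lex-smaller, with no lingering non-standard contributions. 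This is the technical heart of \cite{LaRam95}; porting it from the field setting of \emph{loc.~cit.} to a localization of $\Z$ rests on the freeness of both $A(V,R)$ and $L(V,R)$, which rules out torsion ambiguity when comparing with the modulo-$p$ reductions.
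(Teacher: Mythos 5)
The paper offers no proof of this proposition beyond the citation to \cite{LaRam95} (and to Theorem 2.17 of \cite{BaBa15_unpub}), and your outline is a faithful reconstruction of exactly that argument --- Diamond lemma, freeness of $L(V,R)$ over the PID via Cartier--Lazard, PBW rank comparison, and the Chen--Fox--Lyndon/triangularity step, with the technical rewriting verification explicitly deferred to \emph{loc.~cit.} --- so the two take essentially the same route. The only point where you are slightly glib is the asserted bijection between $R$-indecomposable monomials and non-increasing products of $R$-standard Lyndon words: this is not a formal consequence of unique Lyndon factorization alone (the forbidden subword $x_\alpha x_\beta$ could a priori straddle a boundary between consecutive Lyndon factors, and ``$R$-standard'' in \cite{LaRam95} is defined by a recursive reduction rather than mere subword avoidance), but that identification is itself part of the Lalonde--Ram triangularity argument you already invoke.
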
 

\subsection{Homotopy groups using loop space homology}
Let us denote by $l(M)$ the sum $\sum (-1)^{ |u_i |+1} c_ {ji} u_i u_j$. It is clear that $l(M)$ lies in the free graded Lie algebra on the classes $u_1,\cdots,u_r$ which we denote by  $\Lie^{gr}(u_1,\cdots,u_r)$.  Consider the graded Lie Algebra $\LL_k^{gr}(M)$ (over $\Z$) given by 
$$\frac{\Lie^{gr}(u_1,\cdots,u_k)}{(l(M))}$$
where $(l(M))$ denotes the graded Lie algebra ideal generated by $l(M)$. This is a quadratic graded Lie algebra. In this respect we denote 
$$l(M)= \sum_{i<j} l_{i,j}[u_i,u_j]^{gr} =\sum_{i<j} l_{i,j}(u_i\otimes u_j - (-1)^{|u_i||u_j|}u_j\otimes u_i).$$
We make an analogous ungraded construction. Consider the element 
$$l^u(M)= \sum_{i<j} l_{i,j} [u_i,u_j] = \sum_{i<j} l_{i,j} (u_i\otimes u_j - u_j\otimes u_i).$$
This element lies in $\Lie(u_1,\ldots,u_r) $. We shall make use of the following notation: 
$$A_r^u(M)= \frac{T_{R_\Sigma}(u_1,\ldots,u_r)}{(l^u(M))},\,\,\LL_r^u(M)= \frac{\Lie(u_1,\ldots,u_r)}{(l^u(M))}.$$ 
The Lie algebra $\LL_r^u(M)$ and the associative algebra $A_r^u(M)$ possess an induced grading. 
 
 \begin{prop}\label{lynM}
 $\LL^u_r(M)$ and $\LL^{gr}_r(M)$ are free over $R_\Sigma$. The Lyndon basis gives a basis of $\LL^u_r(M)$. 
 \end{prop}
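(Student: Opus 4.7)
The plan is to first apply the Diamond lemma to $A^u_r(M)$, then pass from freeness of the associative algebra to freeness of the Lie algebra via Cartier--Lazard, and finally invoke Proposition~\ref{liebasis} to produce the Lyndon basis. The graded case $\LL^{gr}_r(M)$ will use the same Cartier--Lazard transfer, but fed from the identification $A^{gr}_r(M)\cong H_*(\Omega M;R_\Sigma)$ of Proposition~\ref{manhomloop}, which is already known to be free.

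For the Diamond-lemma step, the starting point is that Poincar\'e duality on the free $R_\Sigma$-module $H^*(M;R_\Sigma)$ endows the intersection matrix $(c_{ij})$ with unit determinant. I would change the basis of the indecomposables $x_1,\ldots,x_r$, absorbing finitely many additional inverted primes into $\Sigma$ as needed, so that some off-diagonal coefficient $l_{\alpha,\beta}$ with $\alpha\neq\beta$ becomes a unit in $R_\Sigma$. (Should such a change force $l^u(M)=0$ outright, then $\LL^u_r(M)$ is already the free Lie algebra $\Lie(u_1,\ldots,u_r)$, whose Lyndon basis is classical; so we may assume otherwise.) Dividing by this unit recasts the single defining relation as
$$u_\alpha\otimes u_\beta=\sum_{(i,j)\neq(\alpha,\beta)}a_{ij}\,u_i\otimes u_j,$$
putting the presentation of $A^u_r(M)$ into the hypothesis of Proposition~\ref{Diamond}, which in turn exhibits $A^u_r(M)$ as free on its $R$-indecomposable monomials.

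Since $A^u_r(M)$ is the universal enveloping algebra of $\LL^u_r(M)$ and is free of finite generation over the PID $R_\Sigma$, the Cartier--Lazard result recalled in the algebraic preliminaries forces $\LL^u_r(M)$ to be a free $R_\Sigma$-module; the same argument, fed by Proposition~\ref{manhomloop}, yields freeness of $\LL^{gr}_r(M)$. The Lyndon basis for $\LL^u_r(M)$ is then an immediate consequence of Proposition~\ref{liebasis}. The main technical obstacle is the basis change opening the Diamond-lemma step: producing an off-diagonal unit from the non-degeneracy of the intersection form over $R_\Sigma$, while keeping careful track of the finitely many extra primes that must be inverted. Once this normal form is secured, the remainder of the proof is a formal assembly of the algebraic lemmas of the previous subsection.
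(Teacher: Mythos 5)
Your overall route---the Diamond lemma applied to the quadratic associative algebras, the Cartier--Lazard transfer of freeness to the Lie algebras over the PID $R_\Sigma$, and then Proposition \ref{liebasis} for the Lyndon basis---is exactly the one the paper takes. The gap lies in the step you yourself single out as the main obstacle: you never actually produce the off-diagonal unit, and your fallback of ``absorbing finitely many additional inverted primes into $\Sigma$'' does not prove the proposition as stated. The statement asserts freeness over $R_\Sigma$, the localization fixed once and for all by the torsion primes of $H^\ast(M)$; enlarging $\Sigma$ only yields freeness over a strictly larger localization, and it would also silently change the coefficient ring appearing in Proposition \ref{manhomloop} and in everything downstream.

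The point you are missing is that no extra primes need to be inverted, because Poincar\'e duality makes each pairing $H^i(M;R_\Sigma)\otimes H^{d-i}(M;R_\Sigma)\to R_\Sigma$ unimodular (your ``unit determinant'' observation is the shadow of this, but you do not use its full strength). The paper exploits it as follows: for $d>2n$ the degrees $n$ and $d-n$ are distinct, so one may choose a basis of $H^n(M;R_\Sigma)$, take the Poincar\'e-dual basis of $H^{d-n}(M;R_\Sigma)$, and extend to a basis of the indecomposables; ordering so that $x_1^\ast\in H^n(M)$ and $x_r^\ast\in H^{d-n}(M)$ is its dual gives $c_{1,r}=1$ exactly, so $l(M)=\pm\, u_1u_r+(\text{other terms})$ is already in the normal form demanded by Proposition \ref{Diamond} over $R_\Sigma$ itself. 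The remaining case $d=2n$, where the pairing is a unimodular symmetric or skew-symmetric form on $H^n$, is handled by the basis change carried out in \cite{BaBa15_unpub}, again with no inversion of primes. Once this normalization is in place your parenthetical about $l^u(M)=0$ is moot, since $c_{1,r}=1$ forces a nonzero off-diagonal coefficient, and the rest of your assembly of the algebraic lemmas coincides with the paper's proof.
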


 \begin{proof}
 From the formulas in Proposition \ref{manhomloop}, it is clear that $H_*(\Omega M)$ is the universal enveloping algebra of the Lie algebra $\LL^{gr}(M)$ in the graded sense. Analogously, $A_r^u(M)$ is the universal enveloping algebra of $\LL^u(M)$. As $R_\Sigma$ is a PID, for the first statement it suffices to show that $H_*(\Omega M)$ and $A_r^u(M)$ are free $R_\Sigma$-module. We verify this last fact by proving $l(M)$ and $l^u(M)$ satisfies the hypothesis of Proposition \ref{Diamond}. Now Proposition \ref{liebasis} implies the second statement as well. 

Since the coefficients of $l(M)$ and $l^u(M)$ differ only by a sign, it suffices to write the element $l(M)$ as 
$$u_iu_j = \mathit{terms~not~containing~}u_iu_j.$$ 
This is equivalent to a change of basis of the $x_i$ so that some $c_{ji}$ equals $1$. We have seen that this is possible for $d=2n$ in \cite{BaBa15_unpub}. For $d>2n$ note that $n$ and $d-n$ are not the same. Now pick the basis $x_i$ so that the dual classes in $H^n(M)$ are Poincar\'e dual to those in $H^{d-n}(M)$ (this is possible as $n\neq \frac{d}{2}$). For example we may start with a basis of $H^n(M)$ and then the dual basis of $H^{d-n}(M)$ and extend to a basis of $H^{0<*<d}(M)$. Order the basis so that $x_1^* \in H^n(M)$ and $x_r^*\in H^{d-n}(M)$ is the dual element. Then $c_{1,r}=1$ and thus 
$$l(M)= \sum (-1)^{ |u_i |+1} c_ {ji} u_i u_j = u_1 u_r + \mathit{combination~of~other~terms}$$
This completes the proof.
 \end{proof}
 
Next we enlarge the set of primes so that the classes $u_i$ are in the image of the Hurewicz homomorphism. 
\begin{prop}\label{hur}
There exists a finite set of primes $\Gamma$ containing $\Sigma$ such that the classes $u_i$ lie in the image of the Hurewiciz homomorphism $\pi_*(\Omega M) \otimes R_\Gamma \to H_*(\Omega M; R_\Gamma)$. 
\end{prop}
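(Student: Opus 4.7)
The plan is to produce, for each $i$, a homotopy class in $\pi_*(\Omega M)\otimes R_\Gamma$ by lifting the dual class $x_i^*\in H_{|x_i|}(M;R_\Sigma)$ to a map $S^{|x_i|}\to M$, passing to the loop-space adjoint, and identifying its Hurewicz image with $u_i$ via the homology suspension $\sigma$.

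The starting point is a pair of degree bounds. By Poincar\'e duality paired against an indecomposable of complementary degree, every $|x_i|\leq d-n\leq 2n-2$, so $|u_i|\leq 2n-3$. Any Pontrjagin product $u_j u_k$ therefore has degree at least $2(n-1)=2n-2>|u_i|$, and Proposition \ref{manhomloop} shows that $H_{|u_i|}(\Omega M;R_\Sigma)$ is the free module on those $u_j$ with $|u_j|=|u_i|$. Similarly, no cup product of positive-degree classes in $H^*(M;R_\Sigma)$ can land in degree below $2n$, so $H_{|x_i|}(M;R_\Sigma)$ is the free module on the duals $x_j^*$ of the indecomposables of that degree.

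The second step is a mod-$\CC$ Hurewicz argument on the simply connected space $M$. Since $H_*(M;R_\Sigma)$ is finitely generated in each degree, Serre's theorem gives $\pi_k(M)$ finitely generated, and the rational Hurewicz theorem makes $\pi_k(M)\otimes\Q\to H_k(M;\Q)$ an isomorphism for $k\leq 2n-2$. Hence the integral Hurewicz cokernel in each such degree is a finite abelian group, with $p$-torsion at only finitely many primes. I would then take $\Gamma$ to be $\Sigma$ together with all such primes for $n\leq k\leq 2n-2$, a finite union; after inverting $\Gamma$ the Hurewicz map is surjective in these degrees, and each $x_i^*$ lifts to some $\alpha_i\in\pi_{|x_i|}(M)\otimes R_\Gamma$.

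The final step is adjunction. The loop-sphere adjoint $\tilde{\alpha}_i\in\pi_{|u_i|}(\Omega M)\otimes R_\Gamma$ has Hurewicz image $v_i\in H_{|u_i|}(\Omega M;R_\Gamma)$, and naturality of the path-loop fibration gives $\sigma(v_i)=x_i^*$. The bases computed in the first step show that $\sigma\colon H_{|u_i|}(\Omega M;R_\Gamma)\to H_{|x_i|}(M;R_\Gamma)$ sends $u_j\mapsto x_j^*$ and is an isomorphism, so the equation $\sigma(v_i)=x_i^*=\sigma(u_i)$ forces $v_i=u_i$. The main obstacle is the mod-$\CC$ Hurewicz step, which relies on finite generation of $\pi_k(M)$ and on rational Hurewicz surjectivity to pin down a concrete finite $\Gamma$; the bound $d\leq 3n-2$ is essential both in keeping each $|x_i|$ within the rational Hurewicz range for $M$ and in keeping each $|u_i|$ strictly below $2(n-1)$, which is what rules out decomposable correction terms when identifying $v_i$ with $u_i$.
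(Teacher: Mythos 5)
Your argument is correct, but it takes a genuinely different route from the paper's. The paper works directly on $\Omega M$: by the Milnor--Moore theorem, $H_*(\Omega M;\Q)$ is the universal enveloping algebra of the rational homotopy Lie algebra, and formality of $M$ (via minimal models) identifies the $u_i$ as generators of that Lie algebra, hence as classes in the image of the rational Hurewicz map of $\Omega M$; one then clears the finitely many denominators $d_i$ to get $\Gamma$. You instead work on $M$ itself: Serre finiteness of $\pi_k(M)$ together with the rational Hurewicz theorem in the range $k\leq 2n-2$ (which contains every $|x_i|\leq d-n$ precisely because $d\leq 3n-2$) shows the Hurewicz cokernels of $M$ in those degrees are finite, so inverting their torsion primes lifts each $x_i^*$ to a homotopy class; you then transport the lift to $\Omega M$ by adjunction, using that the homology suspension $H_{|u_i|}(\Omega M)\to H_{|x_i|}(M)$ is an isomorphism carrying $u_j$ to $x_j^*$ in this range (Pontrjagin products only start in degree $2n-2>|u_i|$, so no decomposable correction terms can occur). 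Your route is more elementary --- it avoids Milnor--Moore and rational formality, replacing them with the classical rational Hurewicz theorem and a Serre-class argument --- and it makes $\Gamma$ slightly more concrete; the trade-off is that it leans harder on the hypothesis $d\leq 3n-2$ to keep the indecomposables inside the metastable Hurewicz range, whereas the paper's Lie-algebra argument would still apply in situations where that range is exceeded.
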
 

\begin{proof}
Consider the commutative diagram 
$$\xymatrix{\pi_*(\Omega M) \otimes R_\Sigma \ar[r]^{Hur} \ar[d] & H_*(\Omega M; R_\Sigma) \ar[d] \\
                      \pi_*(\Omega M) \otimes \Q \ar[r]^{Hur}                     & H_*(\Omega M; \Q)        }. $$
Since $H_\ast(\Omega M;R_\Sigma)$ is a free $R_\Sigma$-module, the right vertical arrow is injective; it takes $u_i$ to the corresponding element $u_i$.
We know from the Milnor-Moore theorem that $H_\ast(\Omega M;\Q)$ is the universal enveloping algebra on the rational homotopy Lie algebra $\pi_\ast(M)\otimes\Q$. It follows by standard methods (for e.g., from formality of $M$ and minimal models) that $x_i$'s are the generators of homotopy Lie algebra in the appropriate degrees. Thus, the element $u_i$ lie in the Hurewicz homomorphism for $\Q$ coefficients. It follows that for every $i$ there is an integer $d_i$ so that $d_i u_i$ lies in the image of the Hurewicz homorphism. Define $\Gamma$ as $\Sigma$ plus all the prime factors of $d_i$ for $1\leq i \leq r$. Consequently, over $R_\Gamma$, all the $u_i$ are in the image of the Hurewicz homomorphism.
\end{proof}

Our goal is to compute the homotopy groups $\pi_*(M)\otimes R_\Gamma$. We work in the {\it $R_\Gamma$-local category}: that is, the category obtained from spaces by localizing with respect to $H_*( - ; R_\Gamma)$-equivalences. Let $S^n_\Gamma$ denote the $R_\Gamma$-local sphere. We know that if a map between simply connected spaces (or, more generally, simple spaces) is a $H_*(-;R_\Gamma)$-equivalence then it induces an isomorphism on $\pi_\ast(-)\otimes R_\Gamma$.  

From Proposition \ref{hur}, there are elements in $\pi_\ast(M)\otimes R_\Gamma$ which loops down to $u_i$ under the Hurewicz map. By iterated Whitehead products we may map spheres into $M$ corresponding to chosen elements of the Lie algebra $\LL^u(M)$. We describe this in a precise fashion below. 

Let $d(u_i)$ denote the degree of $u_i$. We fix a map $S^{d(u_i)} \to \Omega M$ which maps to $u_i$ under the Hurewicz homomorphism. By adjunction we have a map $\alpha_i: S^{d(u_i)+1} \to M$ with the property that after looping $\alpha_i$ the generator of the Pontrjagin ring maps to $u_i$. 

There exists a Lyndon basis for $\LL^u(M)$ by Proposition \ref{lynM}. Enlist these elements in order as $l_1 < l_2 <\ldots$ and define the height of a basis element by $h_i= h(l_i)=k+1$ if $b(l_i) \in (\LL^u(M))_k$ the $k^{th}$-graded piece. Then $h(l_i)\leq h(l_{i+1})$. Note that $b(l_i)$ represents an element of $\Lie(u_1,\ldots,u_r)$ and is thus represented by an iterated Lie bracket of $u_i$.  Define $\lambda_i : S^{h_i}_\Gamma \to M$ as the Whitehead product replacing each $u_i$ in the bracket by $\alpha_i$.  

%Next we formulate a basis of $\LL_k$ following \cite{Hil55}. We inductively define a basis of $\LL_k(r)$ and also an order on the basis. The degree one basic elements are defined to be $\alpha_1,\ldots,\alpha_k, \beta_1,\ldots,\beta_k$ and the order is $\alpha_1<\ldots < \alpha_k<\beta_1<\ldots <\beta_k$. The basic elements of degree $2$ are 
%$$[\alpha_i,\alpha_j],~[\beta_i,\beta_j]~\mathit{for}~i\neq j,~[\alpha_i,\beta_j]~\mathit{for}~(i,j)\neq (k,k)$$
%and the fix an order arbitrarily among them and in addition declare that a basic element of degree 2 is greater than those of degree 1.  Now suppose basic elements of degree $<w$ are defined and ordered. A basic element of weight $w$ is defined by $[a,b]$ where $a$ is a basic element of degree $u$ and $b$ is a basic element of degree $v$ with $u+v=w$ such that $a<b$ and if $b$ is a bracket $[c,d]$ then $c\leq a$.  

%\begin{prop}
%The set basic elements of degree $r$ form a basis of the free abelian group $\LL_k(r)$. 
%\end{prop}

\begin{theorem}\label{htpy}
There is an isomorphism
$$\pi_*(M) \otimes R_\Gamma \cong \sum_{i\geq 1} \pi_* S^{h_i}\otimes R_\Gamma $$ 
and the inclusion of each summand is given by $\lambda_i$. 
\end{theorem}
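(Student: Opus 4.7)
The plan is to produce an $R_\Gamma$-local weak equivalence
$$F\colon \prod_{i\geq 1}\Omega S^{h_i}_\Gamma \longrightarrow \Omega M$$
and then apply $\pi_\ast$ (with a one-loop re-indexing) to read off the decomposition of $\pi_\ast(M)\otimes R_\Gamma$. First I would build each factor: the adjoint $\mu_i\colon S^{h_i-1}_\Gamma \to \Omega M$ of $\lambda_i$ extends multiplicatively via the Pontryagin product on $\Omega M$ (the James universal property for $\Omega\Sigma$) to $\widetilde\mu_i\colon \Omega S^{h_i}_\Gamma \to \Omega M$. Because $\lambda_i$ was constructed as an iterated Whitehead product, naturality of Hurewicz together with the Samelson--Whitehead correspondence identifies $(\widetilde\mu_i)_\ast$ on the fundamental generator in degree $h_i-1$ with the iterated graded commutator in the $u_j$'s corresponding to $b(l_i)\in\LL^{gr}_r(M)\subseteq H_\ast(\Omega M;R_\Gamma)$. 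Combining the $\widetilde\mu_i$ in a fixed order using the $H$-space structure of $\Omega M$ defines $F$ on the weak product, which is well-defined because $h_i\to\infty$ and so only finitely many factors contribute in each fixed degree.

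The crux is to show $F_\ast$ is an isomorphism on $H_\ast(-;R_\Gamma)$. By Proposition \ref{manhomloop} the target is $U(\LL^{gr}_r(M))$; by Proposition \ref{lynM} and graded Poincar\'e--Birkhoff--Witt over the PID $R_\Gamma$, this is free as an $R_\Gamma$-module and, as a graded module, equals the graded-symmetric algebra on a basis of $\LL^{gr}_r(M)$ derived from the Lyndon basis. The source, by K\"unneth and the standard computation of $H_\ast(\Omega S^h;R_\Gamma)$ (invoking the splitting $\Omega S^{2k}\simeq S^{2k-1}\times\Omega S^{4k-1}$ after inverting $2$, if necessary), is a tensor product of polynomial and exterior factors in matching degrees. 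Matching algebra generators through the $\widetilde\mu_i$ and comparing Poincar\'e series degree-by-degree then yields that $F_\ast$ is an isomorphism. Since $\Omega M$ and the weak product are connected $H$-spaces, hence simple, an $R_\Gamma$-homology equivalence between them is an $R_\Gamma$-local weak equivalence (as recalled just before the theorem), and so $\pi_\ast(F)\otimes R_\Gamma$ is the desired isomorphism, with each summand realized by $\lambda_i$ after looping back.

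The main obstacle I anticipate is precisely this homology matching when some $h_i$ is even: $H_\ast(\Omega S^{h_i};R_\Gamma)$ then contains an extra polynomial generator in degree $2(h_i-1)$, the Whitehead-square class, which must correspond to a genuine PBW-basis element of $\LL^{gr}_r(M)$ rather than to $\tfrac12[b(l_i),b(l_i)]$. Converting the ungraded Lyndon basis of $\LL^u_r(M)$ into a compatible graded basis of $\LL^{gr}_r(M)$ that accounts for such self-brackets is the delicate combinatorial point, and may force a further enlargement of $\Gamma$ (in particular to invert $2$) before the identification can be made rigorous.
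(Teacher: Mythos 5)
Your construction of the comparison map agrees with the paper's: loop the $\lambda_i$ (equivalently, extend their adjoints multiplicatively), combine the factors using the H-space structure of $\Omega M$, pass to the weak product/homotopy colimit, and reduce the theorem to showing this map is an $H_*(-;R_\Gamma)$-equivalence between simple spaces. Up to that point the proposal is sound and matches the paper. The gap is that the homology comparison --- which is the entire content of the proof --- is not actually carried out: ``matching algebra generators \dots and comparing Poincar\'e series degree-by-degree'' presupposes both that the monomials in the classes $\rho(b(l_i))$ span $H_*(\Omega M;R_\Gamma)$, and that the Poincar\'e series of the source (a polynomial algebra with one generator for each Lyndon word of the \emph{ungraded} Lie algebra $\LL^u_r(M)$) agrees with that of the target $U(\LL^{gr}_r(M))$. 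Neither is automatic, and you correctly sense that the second is delicate precisely because of the Whitehead squares: when $h_i$ is even, $H_*(\Omega S^{h_i}_\Gamma)$ is polynomial on an odd-degree class, whereas the graded PBW basis of $U(\LL^{gr}_r(M))$ is exterior on odd-degree Lie elements, so $\rho(b(l_i))^2$ must be absorbed as an even-degree element of the graded Lie algebra (rationally it is $\tfrac12[b(l_i),b(l_i)]^{gr}$, contrary to your parenthetical). You flag this as an obstacle that ``may force'' inverting $2$, but you do not resolve it, so the proof is incomplete as written.

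The paper closes exactly this gap by a surjectivity-plus-rank-count argument that needs no graded Lyndon basis and no further enlargement of $\Gamma$. Surjectivity: an induction on weight shows that every element of $\LL^{gr}_r(M)$ is a combination of monomials in the $\rho(b(l_i))$, the key case being squares of odd-degree classes, which are handled using the axioms of a graded Lie algebra together with $\rho(l^u(M))=l(M)$; combined with PBW this makes $R_\Gamma[\rho(b(l_1)),\rho(b(l_2)),\dots]\to T_{R_\Gamma}(u_1,\dots,u_r)/(l(M))$ surjective. Rank count: the Diamond lemma (Proposition \ref{Diamond}) applies to both relations $l(M)$ and $l^u(M)$ and yields the \emph{same} set of $R$-indecomposable monomials as bases, so the graded and ungraded quotient algebras have equal finite rank in each degree; since $R_\Gamma[b(l_1),b(l_2),\dots]\to T_{R_\Gamma}(u_1,\dots,u_r)/(l^u(M))$ is an isomorphism by the ungraded PBW theorem and Proposition \ref{lynM}, the surjection above is a map of free $R_\Gamma$-modules of equal finite rank in each degree, hence an isomorphism. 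This counting step (or an equivalent computation of the Poincar\'e series of $U(\LL^{gr}_r(M))$) is what your write-up must supply to be complete.
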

\noindent Observe that the right hand side is a finite sum in each degree.
\begin{proof}
The maps $\Omega \lambda_i: \Omega S^{h_i}_\Gamma \rightarrow \Omega M$ for $i=1,\ldots,n$ can be multiplied using the H-space structure on $\Omega M$ to obtain a map from $S(n)=  \prod_{i=1}^n \Omega S^{h_i}_\Gamma \rightarrow \Omega T$. Letting $n$ vary $S(n)$ gives a directed system arising from the inclusion of subfactors using the basepoint. Fix an associative model for $\Omega T$ (for example using Moore loops) and observe that the various maps from $S(n)$ induces a map on the homotopy colimit
$$\Lambda: S := \mathit{hocolim}_n~ S(n) \longrightarrow \Omega M.$$   
Note that homotopy groups of $S$ is the right hand side of the expression in the Theorem shifted in degree by $1$. Hence it suffices to prove that $\Lambda$ is a weak equivalence after inverting the primes in $\Gamma$. As both the domain and codomain are simple spaces, it suffices to show that this is a $R_\Gamma$-homology isomorphism. 

The homology of $S$ is a polynomial algebra with a generator for each copy of $\Omega S^{h_i}_\Gamma$ 
$$H_*(S) \cong T_{R_\Gamma} (c_{h_1 -1})\otimes T_{R_\Gamma} (c_{h_2-1})\ldots \cong R_{\Gamma}[c_{h_1-1},c_{h_2 -1},\ldots ]$$
and $\Lambda_*c_{h_i-1}$ is the  Hurewicz image of $\lambda_i \in H_{h_i-1}(\Omega M)$. Denote $\rho$ as  
$$\rho : \pi_n(X)\cong \pi_{n-1}(\Omega X) \xrightarrow{\mathit{Hur}} H_{n-1}(\Omega X)$$
We know from \cite{Hil55} that 
\begin{equation}\label{lhur}
\rho([a,b])=\rho(a)\rho(b) - (-1)^{|a||b|}\rho(b)\rho(a).
\end{equation}
 
Now from Theorem \ref{manhomloop} that $H_*\Omega M \cong T_{R_\Gamma}(u_1,\cdots,u_r)/(l(M)) $ the universal enveloping algebra of $\LL_r^{gr}(M)$ (in the graded sense). From the Poincar\'e-Birkhoff-Witt theorem for graded Lie algebras we have 
$$E_0 T_{R_\Gamma}(u_1,\cdots,u_r)/(l(M)) \cong E(\LL^{gr}_r(M)^{\mathit{odd}} ) \otimes P(\LL^{gr}_r(M)^{\mathit{even}} )$$
The map $\rho$ carries each $\alpha_i$  to $u_i$. The element  $b(l_i)$ is mapped inside $H_*(\Omega M)$ to the element corresponding to the graded Lie algebra element by equation (\ref{lhur}). We prove that $T(a_1,\ldots,a_r)/(l(M))$ has a basis given by monomials on $\rho(b(l_1)),\rho(b(l_2)),\ldots$.

Observe inductively that all the elements in $\LL^{gr}_r(M)$ can be expressed as linear combinations of monomials in $\rho(b(l_i))$.  It is clear for elements of weight $1$. For the weight $2$ elements note that they are generated by $[u_i,u_j]^{gr}$ for $i<j,~ (i,j)\neq (1,2)$ and $u_i^2$ if $d(u(i))$ is odd. The former are the Lyndon words and the latter is the square of a monomial. In the general case, a graded Lie algebra element is either a monomial or the square of a lower odd degree class; from one of the conditions in the definition of a graded Lie algebra the bracket with a square can be expressed as a bracket. Such a monomial may be obtained by applying $\rho$ on the corresponding ungraded element. This is a linear combination of certain $b(l_i)$ and something in the ideal generated by $l^u(M)$. Applying $\rho$ we obtain a combination of $\rho(b(l_i))$ and something in the ideal generated by $l(M)$ as $\rho(l^u(M))=l(M)$ which verifies the induction step. As an application of the Poincar\'e-Birkhoff-Witt Theorem, we know that $\Lambda_*$ is surjective.   

Hence we have that the graded map
$$ R_\Gamma[\rho(b(l_1)),\rho(b(l_2)),\ldots] \to     T_{R_\Gamma}(u_1,\ldots,u_r)/(l(M))$$
is surjective. We also know 
$$                      R_\Gamma [b(l_1),b(l_2),\ldots]    \to T_{R_\Gamma}(u_1,\ldots,u_r)/(l^u(M))$$
is an isomorphism. Now both $T_{R_\Gamma}(u_1,\ldots,u_r)/(l(M))$ and $T_{R_\Gamma}(u_1,\ldots,u_r)/(l^u(M))$ have bases given by the Diamond lemma and thus are of the same graded dimension. It follows that the graded pieces of $ R_\Gamma [\rho(b(l_1)),\rho(b(l_2)),\ldots]$ and $     T_{R_\Gamma} (u_1,\ldots,u_r)/(l(M))$ have the same rank which is finite. Thus on graded pieces one has a surjective map between free $R_\Gamma$-modules of the same rank which must be an isomorphism.
\end{proof}

We may now compute the number of copies on of $S^k$ in the expression of Theorem \ref{htpy} from the rational cohomology groups of $M$. Let 
$$q_M(t)= 1 - \sum_{n-1<i<d} b_i(M)t^i + t^d$$
Then $\frac{1}{q_M(t)}$ is the generating series for $\Omega M$ (see \cite{LoVa12}, Theorem 3.5.1) and the fact that $H_*(\Omega M)$ is Koszul as an associative algebra (cf. \cite{BerBor15}). Let
$$\eta_m := \mbox{coefficient of $t^m$ in }log(q_M(t)).$$
We may repeat the proof of Theorem 5.7 of \cite{BaBa15_unpub} to deduce the following result.
\begin{theorem}\label{htpyform}
 The number of groups $\pi_s S^j\otimes R_\Gamma$ in $\pi_s(M)\otimes R_\Gamma$ is 
$$l_{j-1}= - \sum_{d|j-1} \mu(d)\frac{\eta_{(j-1)/d}}{d}$$
where $\mu$ is the M\"obius function.
\end{theorem}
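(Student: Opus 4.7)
The plan is to identify $l_{j-1}$ with the dimension of the $(j-1)$-th graded piece of $\LL^u_r(M)$, then use the Poincar\'e--Birkhoff--Witt theorem to express the Hilbert series of $A^u_r(M)$ in terms of the $l_k$'s, equate it with $1/q_M(t)$, and extract each $l_n$ via M\"obius inversion.

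First I would apply Theorem \ref{htpy}: the number of copies of $\pi_\ast S^j\otimes R_\Gamma$ occurring in $\pi_\ast(M)\otimes R_\Gamma$ equals the number of Lyndon basis elements $b(l_i)$ with $h_i = j$, which by Proposition \ref{lynM} together with the definition of $h_i$ is precisely $l_{j-1} := \dim_{R_\Gamma}\LL^u_r(M)_{j-1}$. Next, since $\LL^u_r(M)$ is free over $R_\Gamma$ (Proposition \ref{lynM}) with universal enveloping algebra $A^u_r(M)$, the ungraded Poincar\'e--Birkhoff--Witt theorem yields
$$\mathrm{Hilb}\bigl(A^u_r(M)\bigr)(t) \;=\; \prod_{k\geq 1}\frac{1}{(1-t^k)^{l_k}}.$$
The defining relations of $A^u_r(M)$ and $H_\ast(\Omega M;R_\Gamma)$ differ only by signs, so the two algebras have equal graded dimensions; combined with the Koszulness of $H_\ast(\Omega M)$ established in \cite{BerBor15}, the common Hilbert series equals $1/q_M(t)$.

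Taking logarithms on both sides gives
$$-\log q_M(t) \;=\; \sum_{k\geq 1} l_k \sum_{m\geq 1}\frac{t^{km}}{m},$$
and equating coefficients of $t^n$ produces the identity $-n\eta_n = \sum_{k\mid n} k\, l_k$. Standard M\"obius inversion then yields $l_n = -\sum_{d\mid n}\mu(d)\,\eta_{n/d}/d$, and substituting $n = j-1$ completes the proof.

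The main subtlety is the identification $\mathrm{Hilb}(A^u_r(M))(t) = 1/q_M(t)$, which requires carefully reconciling the weight-versus-internal-degree grading conventions and invoking Koszul duality between $H_\ast(\Omega M)$ and the appropriate version of $H^\ast(M)$; once this is in place, the rest of the argument is routine generating-function bookkeeping, as in the proof of Theorem~5.7 of \cite{BaBa15_unpub}.
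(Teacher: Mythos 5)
Your proposal is correct and follows essentially the same route as the paper, which simply invokes the Koszul property of $H_*(\Omega M)$ to identify its Hilbert series with $1/q_M(t)$ and then defers to the proof of Theorem 5.7 of \cite{BaBa15_unpub} for the PBW-plus-M\"obius-inversion bookkeeping that you spell out explicitly. The one point you flag --- reconciling the grading in which $q_M(t)$ is written with the internal degree $h_i-1$ indexing the Lyndon basis elements --- is indeed the only delicate step, and the paper handles it no more explicitly than you do, deferring to the cited earlier proof.
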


%\begin{proof}
%It is enough to compute the dimension $l_d$ of the $d^{th}$-graded part of the Lie algebra $\LL^u_r(M)$. We use the generating series to compute this from the universal enveloping algebra $H_*(\Omega M)$ as in \cite{rh4}.

%The generating series for $H_*(\Omega M)$ is $p(t)= \frac{1}{1-rt^{n-1} + t^{2n-2}}$. From the Poincar\'e-Birkhoff-Witt theorem the symmetric algebra on $\LL^u_r(M)$ is $H_*(\Omega M)$. Hence we have the equation 
%$$\frac{1}{\prod_d (1-t^d)^{l_d}} =   \frac{1}{1-rt^{n-1} + t^{2n-2}}$$
%Take log of both sides :
%\begin{eqnarray*}
%\log (1-rt^{n-1}+t^{2n-2} ) & = & \sum_d l_d\log (1-t^d)\\
%& = & -\sum_d l_d\left(t^d+\frac{t^{2d}}{2}+\frac{t^{3d}}{3}+\cdots\right)
%\end{eqnarray*}
%Expanding this and equating coefficients, we see that 
%$$
%\lambda_m:=\textup{coefficient of $t^m$ in $\log (1-rt^{n-1}+t^{2n-2})$} = -\frac{1}{m}\bigg(\sum_{d|m} d l_d \bigg).
%$$
%We use the M\"{o}bius inversion formula; it gives us 

%$$l_m =-\sum_{d|m} \mu(d)\frac{\lambda_{m/d}}{d}.  $$
%In fact, by expanding $\log (1-rt^{n-1}+t^{2n-2})$, we see that $\lambda_m =0$ if $m$ is not divisible by $n-1$ and 
%$$\lambda_{d(n-1)}= -\sum_{a+2b=d}(-1)^b{a+b \choose b}\frac{r^a}{a+b}$$
%We conclude that $l_m$ is $0$ if $m$ is not divisible by $n-1$ and 
%$$l_{d(n-1)} = \sum_{e|d}\frac{ \mu(e)}{d}\sum_{a+2b=\frac{d}{e}}(-1)^b{a+b \choose b}\frac{r^a}{a+b}$$

%\end{proof}

Recall that simply connected, finite cell complexes either have finite dimensional rational homotopy groups or exponential growth of ranks of rational homotopy groups (cf. \cite{FHT01}, \S 33). The former are called rationally elliptic while the latter are called rationally hyperbolic. From \cite{BerBor15} we note that the $(n-1)$-connected manifolds of dimension at most $(3n-2)$ with $H^*(M)$ having rank at least $4$ are all rationally hyperbolic. One may also verify this directly. Since the rank of $H^*(M)$ is at least $4$ the number of generating $u_i$ is at least $3$. Then one observes that after switching the ordering appropriately the word
$$u_1u_2u_1u_2u_1u_3$$
is a Lyndon word in degree $>2d$ as each $u_i$ has degree $>\frac{d}{3}$. So these manifolds cannot be rationally elliptic. This forces by the Milnor-Moore Theorem, that $\LL^{gr}(M)\otimes \Q$ has infinite rank. Hence, $\LL^u(M)$ also has infinite rank. 

There are many conjectures that lie in the dichotomy between rationally elliptic and hyperbolic spaces. We verify such a conjecture by Moore (\cite{FHT01}, pp. 518) below. For a rationally hyperbolic space $X$ the Moore conjecture states that there are primes $p$ for which the homotopy groups do not have any exponent at $p$, that is, for any power $p^r$ there is an element $\alpha\in \pi_*(X)$ of order $p^r$. We verify the following version.
\begin{theorem}\label{Moorehcm}
If $p\notin \Gamma$, the homotopy groups of $M$ do not have any exponent at $p$. 
\end{theorem}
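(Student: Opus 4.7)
The plan is to combine the splitting
$$\pi_*(M) \otimes R_\Gamma \cong \bigoplus_{i \geq 1} \pi_*(S^{h_i}) \otimes R_\Gamma$$
from Theorem \ref{htpy} with known lower bounds on the $p$-primary exponents of sphere homotopy groups, in order to exhibit $p$-torsion elements in $\pi_*(M)$ of arbitrarily high order.

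First I would verify that the sequence of sphere dimensions $\{h_i\}$ is unbounded. The hypothesis $\dim H^*(M;\Q) > 4$ makes $M$ rationally hyperbolic, and the argument recorded immediately before Theorem \ref{Moorehcm} already exhibits an infinite family of Lyndon words in $\LL^u_r(M)$ of unboundedly large weight (iterated variants of $u_1 u_2 u_1 u_2 u_1 u_3$, for instance). Since each generator $u_i$ has graded degree at least $n-1 \geq 1$, the graded degree of the associated Lie element --- hence the sphere dimension $h_i$ appearing in the Whitehead product $\lambda_i$ --- tends to infinity with $i$. Because $p \notin \Gamma$, the $p$-localization $\Z_{(p)}$ is an $R_\Gamma$-algebra, so tensoring the splitting with $\Z_{(p)}$ realizes each $\pi_*(S^{h_i})_{(p)}$ as a direct summand of $\pi_*(M)_{(p)}$.

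The crucial input is then the Cohen--Moore--Neisendorfer exponent theorem together with Gray's (later Neisendorfer's) lower bound: for an odd prime $p$, the group $\pi_*(S^{2k+1})_{(p)}$ contains an element of order exactly $p^k$. Using Serre's classical $p$-local splitting $\Omega S^{2k} \simeq_{(p)} S^{2k-1} \times \Omega S^{4k-1}$, the same conclusion (with $p^{k-1}$) transfers to even-dimensional spheres. Since $h_i \to \infty$, for every $r \geq 1$ some summand $\pi_*(S^{h_i})_{(p)}$ contributes a $p$-torsion element of order at least $p^r$; this says precisely that $\pi_*(M)$ admits no $p$-exponent.

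The main delicate point is the prime $p=2$, where the Cohen--Moore--Neisendorfer--Gray package does not directly apply. One option is to invoke $2$-primary lower-bound results (e.g.\ of Selick, or of Cohen--Mahowald based on elements of Hopf invariant one and their desuspensions); alternatively, one may simply enlarge the finite set $\Gamma$ to include $2$, which is consistent with the theorem's statement. Either route delivers the conclusion.
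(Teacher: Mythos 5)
Your proof follows the same skeleton as the paper's --- invoke the splitting of Theorem \ref{htpy}, observe that the rational hyperbolicity argument forces the sphere dimensions $h_i$ to be unbounded, and then feed in a statement guaranteeing $p$-torsion of arbitrarily high order in $\pi_*(S^{l})$ for large $l$ --- but the key input in the last step is genuinely different. You appeal to the unstable exponent package (Gray's lower bound of order $p^k$ in $\pi_*(S^{2k+1})$, sharpened by Cohen--Moore--Neisendorfer, transferred to even spheres via the Serre splitting of $\Omega S^{2k}$ after inverting $2$), whereas the paper stays entirely stable: since $h_i\to\infty$, the summands $\pi_*(S^{h_i})$ eventually realize the stable groups $\pi_k^s\cong\pi_{k+l}(S^l)$ for $l>k+1$, and the image of the $J$-homomorphism in $\pi_{4k-1}^s$ already contains cyclic groups whose order has unbounded $p$-adic valuation for \emph{every} prime $p$ (\cite{Rav86}, Theorem 1.1.13). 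The stable route is lighter machinery and, importantly, uniform in $p$: it disposes of the prime $2$ with no extra work, which is exactly the point where your argument needs either a separate $2$-primary lower bound or an ad hoc enlargement of $\Gamma$. Your route does buy sharper information (exact unstable exponents at odd primes), but that precision is not needed for the statement; if you keep your version, you should commit to one of the two fixes at $p=2$ --- and note that quoting the image of $J$ at $p=2$ would resolve it without changing $\Gamma$, at which point you have essentially rejoined the paper's argument.
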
   

\begin{proof}
We have noted above that $\LL^u(M)$ has infinite rank. Thus there are elements of the Lyndon basis of arbitrarily large degree. Hence for arbitrarily large $l$, $\pi_*S^l$ occurs as a summand of $\pi_*M$. The proof is complete by observing  that any $p^s$ may occur as the order of an element in $\pi_*S^l$ for arbitrarily large $l$. This follows from the fact that the same is true for the stable homotopy groups and these can be realized as $\pi_k^s\cong \pi_{k+l}S^l$  for $l>k+1$. Now torsion of order $p^s$ for any $s$ occurs in the image of the $J$-homomorphism (cf. \cite{Rav86}, Theorem 1.1.13).  
\end{proof}

\subsection{The low rank cases}\label{r12}
We end by demonstrating the above computations when the rank of $H_*(M;\Q)$ is at most $4$. Since $H^0$ and $H^d$ are always $\Q$, we have to consider three possibilities: rank $2,3,4$. Our main techniques involve determining the rational homotopy type of $M$, of dimension $d$, and using it to compute the homotopy type at all but finitely many primes. 

In the rank $2$ case we know that rationally $M$ is a sphere  so that $M_\Q \simeq S^d_\Q$. Let $\Sigma$ denote the finite set of primes which occur as torsion in the homology of $M$. With $R_\Sigma$-coefficients, the $R_\Sigma$ localization $M_\Sigma$ of $M$ is a homology $d$-sphere. Thus $H^*(M;R)\cong H^*(S^d;R)$ for any ring $R$ lying between $R_\Sigma$ and $\Q$. Let $\alpha$ stand for the common notation for a generator of $H_*(M;R)$ for any such $R$. As $M_\Q \simeq S^d_\Q$, $\alpha$ lies in the image of the rational Hurewicz homomorphism. It follows that with $R_\Sigma$-coefficients, there is an integer $k$ so that $k\alpha$ lies in the image of the Hurewicz homomorphism. Let $\Gamma$ denote the union of $\Sigma$ and the prime divisors of $k$. Then $\Gamma$ is a finite set and $\alpha$ lies in the image of the $R_\Gamma$ Hurewicz homomorphism. It is now clear that there is a map $S^d_\Gamma \to M$ which is an isomorphism with  $R_\Gamma$-coefficients. Therefore, we have a homotopy equivalence  
$M_\Gamma \simeq S^d_\Gamma.$ Note that the torsion in the homology can be quite varied. So this is precisely the sort of result we are looking for. 

For the next case, let $J_2S^n$ be the second stage of the James construction which is obtained as the mapping cone of the Whitehead product $[id,id]$. If $H^*(M;\Q)$ has rank $3$, by Poincar\'e duality the cohomology ring is forced to be $\Q[x_s]/(x_s^3)$ where $d=2s$. By graded commutativity $s$ is forced to be even. The rational homotopy of such a space may be computed directly from the cohomology ring structure as the ring structure forces the space to be formal. The minimal model is given by $\Lambda(x_s,y_{3s-1})$ with $d(x_s)=0$ and $d(y_{3s-1})=x_s^3$. Thus the rational homotopy groups of $M$ are given by 
$$\pi_k^\Q(M) = \left\{\begin{array}{rl}
\Q  & \text{if $k=s,3s-1$} \\ 
0  &  \text{otherwise}.
\end{array}\right.$$   
It follows that on rationalizations we have map $S^s_\Q \to M_\Q$ which is an isomorphism on $\pi_*^\Q$ for $*\leq 2s-2$. In degree $2s-1$ the homotopy groups are $\pi_{2s-1}^\Q S^s\cong \Q\{[id,id]\}$ and $\pi^\Q_{2s-1}(M)=0$. Therefore the composite 
$$S^{2s-1}_\Q \xrightarrow{[id,id]} S^s_\Q \lra M_\Q$$
is null-homotopic and thus factors through the cofibre $(J_2S^s)_\Q$. Therefore we obtain a map $(J_2S^s)_\Q \to M_\Q$ which is an isomorphism on $H^s$ and by cup products also on $H^d$. As a result, one obtains that $M_\Q \simeq J_2S^{s}_\Q$. 

Next we upgrade the rational homotopy result to one which is valid after inverting finitely many primes, that is, over a set $\Gamma$ of finitely many primes that the $\Gamma$-localizations of the above two spaces are weakly equivalent. Let $\Sigma$ denote all the primes which appear as torsion in the homology of $M$. As $M$ is a compact CW-complex, the homotopy groups of $M$ are finitely generated. Let $\Gamma$ denote the primes in $\Sigma$ together with the finite list of primes which appear as torsion in $\pi_{2s-1}(M)$ and those which need to be inverted so that $x_s$ lies in the image of the Hurewicz homomorphism. We consider the following commutative diagram:
$$\xymatrix{S^{2s-1}_\Gamma \ar[r]^-{[id,id]} \ar[d] & S^s_\Gamma \ar[d] \ar[r]  & M_\Gamma \ar[d] \\ 
                      S^{2s-1}_\Q  \ar[r]^-{[id,id]}                   & S^s_\Q  \ar[r]                    & M_\Q                      }$$
The composite in the bottom row is $0$. The composite in the top row gives an element in $\pi_{2s-1}(M) \otimes R_\Gamma$ which injects into $\pi_{2s-1}(M)\otimes \Q$ by our choice of $\Gamma$. The latter group is $0$ from our choices. It follows that the composite of the top row is $0$ and thus we obtain a map from the mapping cone $J_2S^{s}_\Gamma \to M_\Gamma$ which is an isomorphism in cohomology with $R_\Gamma$ coefficients by the same argument as that for $\Q$. Thus we deduce that 
$$M_\Gamma \simeq (J_2S^{s})_\Gamma.$$

It remains to consider the last case when total rank is $4$. Let $\#^2J_2(n)$ denote the mapping cone of 
$$[id^1,id^1]+[id^2,id^2]: S^{2n-1}\to S^n\vee S^n$$
Then, 
$$H^*(\#^2J_2(n);\Q) \cong \Q[x_n,y_n]/(x_n^3,y_n^3,x_n y_n,x_n^2=y_n^2).$$   
If $H^*(M;\Q)$ has rank $4$, then by Poincar\'e duality the rational cohomology ring is forced to be one of the following: \\
(a) $\{ 1, x_s,y_s, x_s^2=y_s^2\}$ (where $d=2s$ with $s$ even), \\
(b) $\{ 1,x_k,y_{d-k},x_k \cdot y_{d-k} \} $. \\
Notice that (a) is the rational cohomology ring of $\#^2J_2(s)$ while (b) is the rational cohomology ring of $S^k \times S^{d-k}$. We now deduce that the rational homotopy type of $M$ must indeed be one of these. 

The rational homotopy groups may be computed directly as the ring structure forces the space to be formal. The minimal model for type (a) is given by $$\Lambda(x_s,y_s, u_{2s-1}, v_{2s-1}), d(x_s)=d(y_s)=0, d(u_{2s-1})=x_s^2-y_s^2, d(v_{2s-1})=x_sy_s.$$ 

Thus the rational homotopy groups of $M$ are given by 
$$\pi_k^\Q(M) = \left\{\begin{array}{rl}
\Q & \text{if $k=0$}\\
\Q^2 & \text{if $k=s, 2s-1$}\\ 
0 & \text{otherwise}.
\end{array}\right.$$   
Therefore, on rationalizations we have a map $S^s_\Q\vee S^s_\Q \to M_\Q$ representing $x_s$ and $y_s$ which is an isomorphism on $\pi_*^\Q$ for $*\leq 2s-2$. In degree $2s-1$ the homotopy group $$\pi_{2s-1}^\Q (S^s\vee S^s) \cong \Q\{[id^1,id^1],[id^1,id^2],[id^2,id^2]\}.$$ In the computation of homotopy groups using minimal models one knows that the quadratic part of the differential represents the Whitehead product, and so it follows that the element $[id^1,id^1]+[id^2,id^2]$ goes to $0$ in $M$.  Therefore the composite 
$$S^{2s-1}_\Q \xrightarrow{[id^1,id^1]+[id^2,id^2]} S^s_\Q \lra M_\Q$$
is null-homotopic and thus factors through the cofibre $(\#^2J_2(s))_\Q$. Therefore we obtain a map $(\#^2J_2(s))_\Q \to M_\Q$ which is an isomorphism on $H^s$ and by cup products also on $H^d$. It follows that $M_\Q \simeq \#^2J_2(s)_\Q$. 

If the cohomology algebra is of type (b), the minimal model matches that for the product $S^k\times S^{d-k}$. Thus, on rationalizations we have a map $S^k_\Q\vee S^{d-k}_\Q \to M_\Q$ which is an isomorphism on $\pi_*^\Q$ for $*\leq 2s-2$. In degree $2s-1$  the class $[id_k,id_{d-k}]$ generates a copy of $\Q$ in $\pi_{2s-1}^\Q (S^k\vee S^{d-k})$. As in the argument above, one shows that  $[id_k,id_{d-k}]$ goes to $0$ in $M$. Therefore we obtain a map $(S^k\times S^{d-k})_\Q \to M_\Q$ which is an isomorphism on $H^{\leq d-1}$ and by cup products also on $H^d$. It follows that $M_\Q \simeq (S^k\times S^{d-k})_\Q$. 

As in the $r=3$ case we upgrade the rational homotopy result to one which is valid after inverting finitely many primes. Let $\Sigma$ denote all the primes which appear as torsion in the homology of $M$ and $\Gamma$ denote the primes in $\Sigma$ together with the finite primes which appear as torsion in $\pi_{2s-1}(M)$, and so that the generators of $H_{\leq d-1}$ lie in the image of the $R_\Gamma$-Hurewicz homomorphism. Let $\phi$ denote $[id^1,id^1]+[id^2,id^2]$ or $[id^1,id^2]$ accordingly as $H^*(M)$ is of type (a) or (b). We consider the following commutative diagram 
$$\xymatrix{S^{2s-1}_\Gamma \ar[r]^{\phi} \ar[d] & S^s_\Gamma \vee S^s_\Gamma \ar[d] \ar[r]  & M_\Gamma \ar[d] \\ 
                      S^{2s-1}_\Q  \ar[r]^{\phi}                   & S^s_\Q \vee S^s_\Q                    \ar[r]            & M_\Q                      }$$
The composite in the bottom row is $0$. The composite in the top row gives an element in $\pi_{2s-1}(M) \otimes R_\Gamma$ which injects into $\pi_{2s-1}(M)\otimes \Q$ by our choice of $\Gamma$. The class $\phi$ maps to $0$ in the latter group as proved above. It follows that the composite of the top row is $0$ and thus we obtain a map from the mapping cone 
$$\mathit{Cone}(\phi)_\Gamma \to M_\Gamma$$
which is an isomorphism in cohomology with $R_\Gamma$ coefficients by the same argument as that for $\Q$. Thus we deduce that 
$$M_\Gamma \simeq \mathit{Cone}(\phi)_\Gamma.$$

\noindent We summarize all the above computations and observations in the result below.

\begin{theorem}\label{lowrank}
Let $M$ be a $(n-1)$-connected $d$-manifold with $d\leq 3n-2$. Suppose that the total rank of $H^*(M;\Q)$ is at most $4$. \\
(i) If the rank is $2$, then there is a finite set of primes $\Gamma$ such that $M_\Gamma \simeq S^d_\Gamma$.\\
(ii) If the rank is $3$, then there is a finite set of primes  $\Gamma$ such that $M_\Gamma \simeq J_2S^{ d/2}_\Gamma$.\\
(iii) If the rank is $4$, then there is a finite set of primes  $\Gamma$ such that $M_\Gamma \simeq (\#^2J_2(\frac{d}{2}))_\Gamma$ or  $M_\Gamma \simeq (S^k\times S^{d-k})_\Gamma$.\\
\end{theorem}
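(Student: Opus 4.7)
The plan is to handle the three cases separately, in each case following a three-step template: first pin down $H^*(M;\Q)$ using Poincaré duality and graded commutativity; second exploit the fact that such a small cohomology ring forces $M$ to be formal, so that its rational homotopy type is read off from an explicit minimal model; and third upgrade the resulting rational equivalence to an $R_\Gamma$-local weak equivalence by inverting a suitable finite set of primes. In each rank, the finite set $\Gamma$ will be built from the torsion primes of $H_*(M)$, the torsion primes of a single relevant homotopy group of $M$, and the finitely many denominators needed to lift certain cohomology generators through the rational Hurewicz map.

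For rank $2$, Poincaré duality alone forces $M$ to be a rational homology $d$-sphere, so the top class lies in the rational Hurewicz image; inverting the above primes produces a degree-one map $S^d_\Gamma \to M_\Gamma$ that is an $R_\Gamma$-homology isomorphism. For rank $3$, Poincaré duality and graded commutativity pin the ring down to $\Q[x_s]/(x_s^3)$ with $d=2s$ and $s$ even, with minimal model $\Lambda(x_s,y_{3s-1})$. A Hurewicz lift of $x_s$ gives $S^s_\Q \to M_\Q$; since $\pi_{2s-1}^\Q(M)=0$, its composite with the Whitehead square $[\mathit{id},\mathit{id}]$ is null-homotopic, so the map extends over the cone $J_2 S^s$, and a cup-product check shows the extension is a rational equivalence. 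Rank $4$ splits by Poincaré duality into the type $\{1,x_s,y_s,x_s^2=y_s^2\}$ and the type $\{1,x_k,y_{d-k},x_k y_{d-k}\}$; the same formality-plus-Whitehead-vanishing argument, applied to the wedge $S^s_\Q\vee S^s_\Q$ or $S^k_\Q\vee S^{d-k}_\Q$, identifies $M_\Q$ with $(\#^2 J_2(s))_\Q$ or $(S^k\times S^{d-k})_\Q$ respectively.

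The main obstacle, and the only step beyond bookkeeping, will be upgrading these rational equivalences to $R_\Gamma$-local ones in cases (ii) and (iii), since we need a genuine null-homotopy of the relevant Whitehead combination after localization, not just rationally. For this I would use the commutative square comparing the $R_\Gamma$-local and the rationalized Whitehead-product composite into $M$: by the choice of $\Gamma$ the right-hand vertical is injective on $\pi_{2s-1}(-)$, so vanishing of the rational composite forces vanishing of the $R_\Gamma$-local composite, and the map from the appropriate mapping cone extends. An $R_\Gamma$-cohomology computation parallel to the rational one then shows the extension is an $R_\Gamma$-local weak equivalence, completing all three cases.
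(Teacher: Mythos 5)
Your proposal is correct and follows essentially the same route as the paper: Poincar\'e duality plus formality to identify $M_\Q$ with $S^d_\Q$, $(J_2S^{d/2})_\Q$, $(\#^2J_2(d/2))_\Q$ or $(S^k\times S^{d-k})_\Q$, followed by the Hurewicz-lift and commutative-square argument to promote the rational equivalence to an $R_\Gamma$-local one. The only detail worth spelling out, in the rank-$4$ case of type (a) where $\pi_{2s-1}^\Q(M)\cong\Q^2\neq 0$, is that the vanishing of the specific combination $[id^1,id^1]+[id^2,id^2]$ follows from the fact that the quadratic part of the minimal-model differential represents the Whitehead product, rather than from the vanishing of the whole homotopy group as in the rank-$3$ case.
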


\bibliographystyle{siam}
\bibliography{htpy3n-2_ver2a.bbl}
\vspace*{0.5cm}

\end{document}